\newcounter{numberofremark}
\newcommand\nothing[1]{}
\newcommand{\dcl}{\DeclareMathOperator}
\dcl\cdet{cdet} \dcl\Sp{Specm} \dcl\depth{depth} \dcl\im{Im} \dcl\Span{span} \dcl\Ker{Ker} \dcl\Specm{Specm}
\dcl\Supp{Supp} \dcl\codim{codim} \dcl\Y{Y} \dcl\gl{\mathfrak{gl}}    \dcl\U{U} \dcl\T{T}
\dcl\qdet{qdet} \dcl\sgn{sgn} \dcl\gr{gr} \dcl\diag{diag}
\dcl\g{\mathfrak{g}} \dcl\C{\mathbb C} \dcl\dd{{\mathrm d}}
\newcommand\sn{{\mathsf n}}
\newcommand\sr{{\mathsf r}}
\newcommand\sm{{\mathsf m}}
\newcommand\Ga{{\Gamma}}
\newlength\yStones
\newlength\xStones
\newlength\xxStones
\def\Stones{\pst@object{Stones}}
\def\Stones@i#1{%
  \pst@killglue%
  \begingroup%
  \use@par%
  \setlength\xxStones{\xStones}%
  \expandafter\Stones@ii#1,,\@nil
  \endgroup
  \global\addtolength\xStones{0.6cm}%
  \global\addtolength\yStones{-7.5mm}}%
\def\Stones@ii#1,#2,#3\@nil{%
  \rput(\xxStones,\yStones){%
    \psframebox[framesep=0]{%
      \parbox[c][6mm][c]{11mm}{\makebox[11mm]{$#1$}}}}%
  \addtolength\xxStones{1.2cm}%
  \ifx\relax#2\relax\else\Stones@ii#2,#3\@nil\fi}
\def\Stone#1{\fbox{\makebox[10mm]{\strut#1}}\kern2pt}
\newtheorem{theorem}{Theorem}[section]
\newtheorem{lemma}[theorem]{Lemma}
\newtheorem{corollary}[theorem]{Corollary}
\newtheorem{proposition}[theorem]{Proposition}
\newtheorem{remark}[theorem]{Remark}
\newtheorem{notation}[theorem]{Notation}
\newtheorem{definition}[theorem]{Definition}
\begin{document}
\title{ Irreducible subquotients of generic Gelfand-Tsetlin modules over $U_{q}(\mathfrak{gl}_n)$}
\author{Vyacheslav Futorny}
\address{Instituto de Matem\'atica e Estat\'istica, Universidade de S\~ao
Paulo,  S\~ao Paulo SP, Brasil} \email{futorny@ime.usp.br}
\author{Luis Enrique Ramirez}
\address{Universidade Federal do ABC, Santo Andr\'e SP, Brasil} \email{luis.enrique@ufabc.edu.br}
\author{Jian Zhang}
\address{Instituto de Matem\'atica e Estat\'istica, Universidade de S\~ao
Paulo,  S\~ao Paulo SP, Brasil} \email{zhang@ime.usp.br}

\begin{abstract}
We provide a classification and explicit bases of tableaux of all irreducible subquotients of generic Gelfand-Tsetlin modules  over $U_{q}(\mathfrak{gl}_n)$ where $q\neq \pm 1$.
\end{abstract}

\subjclass[2010]{Primary 17B67}
\keywords{Gelfand-Tsetlin modules,  Gelfand-Tsetlin basis, tableaux realization}

\maketitle
\section{Introduction}

Recently there has been a breakthrough in the theory of Gelfand-Tsetlin modules in the papers \cite{FGR1}, \cite{FGR2}, \cite{FGR3}, \cite{FGR}. In these papers new classes of simple 
$\mathfrak{gl}_n$-modules were constructed generalising a classical Gelfand-Tsetlin basis \cite{GT}, \cite{m:gtsb} for finite-dimensional representations. These new representations also have a basis consisting of  
Gelfand-Tsetlin tableaux but such tableaux are not necessarily eigenvectors of the Gelfand-Tsetlin subalgebra \cite{DFO3}.  This fact requires a modified action of the generators of the Lie algebra on this basis.
  Gelfand-Tsetlin representations are related to the theory of integrable systems  \cite{KW-1}, \cite{KW-2}, \cite{C1}, \cite{C2}, \cite{CE1}, \cite{CE2}, 
  general hypergeometric functions on the complex Lie group $GL(n)$, \cite{Gr1},\cite{Gr2}; solutions of the
Euler equation, \cite{FM},  \cite{Vi} among the others.

   The purpose of current paper is to study  the Gelfand-Tsetlin basis for quantum  $\mathfrak{gl}_n$ aiming to generalize the constructions above in the quantum case. Previously, partial results were obtained  for example in \cite{MT}, 
  \cite{UST} , \cite{UST2}, \cite{FHW}.   A general theory of Gelfand-Tsetlin modules for quantum  $\mathfrak{gl}_n$ was developed in \cite{FHR}. Even though quantization of the  Gelfand-Tsetlin basis for generic module in the non-root of unity case 
 may seem straightforward it  does require a very careful treatment which was done in this paper.  We also include a root of unity case.  
  
  Our main result is Theorem~\ref{the-main} which provides explicit construction of all irreducible generic Gelfand-Tsetlin modules with tableaux realization. In Section~\ref{section-root} we consider $q$ a root of unity  and apply our construction in this case. 
  Iy yields new explicit constructions of some finite dimensional irreducible modules.

\

\noindent{\bf Acknowledgements.}  
V.F. is
supported in part by the CNPq grant (301320/2013-6) and by the
Fapesp grant (2014/09310-5). 
J. Z. is supported by the Fapesp grant (2015/05927-0).

\section{Notation and conventions}

Throughout the paper we fix an integer $n\geq 2$. The ground field will be ${\mathbb C}$.  For $a \in {\mathbb Z}$, we write $\mathbb Z_{\geq a}$ for the set of all integers $m$ such that $m \geq a$. Similarly, we define $\mathbb Z_{< a}$, etc.   By $U_q$ we denote the quantum enveloping algebra of $\gl(n)$. We fix the standard Cartan subalgebra  $\mathfrak h$, the standard triangular decomposition and the corresponding basis of simple roots of $U_q$.  The weights of $U_q$ will be written as $n$-tuples $(\lambda_1,...,\lambda_n)$. For a commutative ring $R$, by ${\rm Specm}\, R$ we denote the set of maximal ideals of $R$. We will write the vectors in $\mathbb{C}^{\frac{n(n+1)}{2}}$ in the following form:
$$
L=(l_{ij})=(l_{n1},...,l_{nn}\ |\ l_{n-1,1},...,l_{n-1,n-1}\ |\ \cdots|l_{21}, l_{22}|l_{11}).
$$
For $1\leq j \leq i \leq n$, $\delta^{ij} \in {\mathbb Z}^{\frac{n(n+1)}{2}}$ is defined by  $(\delta^{ij})_{ij}=1$ and all other $(\delta^{ij})_{k\ell}$ are zero. For $i>0$ by $S_i$ we denote the $i$th symmetric group. Let $1(q)$ be the set of all complex $x$ such that $q^{x}=1$. Finally, for any complex number $x$, we set
\begin{align*}
(x)_q=\frac{q^x-1}{q-1},\quad
[x]_q=\frac{q^x-q^{-x}}{q-q^{-1}}.
\end{align*}

\section{Gelfand-Tsetlin modules}
Let $P$ be the free $\mathbb Z$-lattice of rank n with the
canonical basis $\{\epsilon_{1},\ldots,\epsilon_{n}\}$, i.e.
$P=\bigoplus_{i=1}^{n}\mathbb Z\epsilon_{i}$, endowed with  symmetric
bilinear form
$\langle\epsilon_{i},\epsilon_{j}\rangle=\delta_{ij}$.
Let $\Pi=\{\alpha_j=\epsilon_j-\epsilon_{j+1}\ |\ j=1,2,\ldots\}$ and
$\Phi=\{\epsilon_{i}-\epsilon_{j}\ |\ 1\leq i\neq j\leq n-1\}$. Then $\Phi$ realizes the root system
of type $A_{n-1}$ with $\Phi$ a basis of simple roots.

We define $U_{q}$ as a unital associative algebra generated by $e_{i},f_{i}(1\leq i
\leq n)$ and $q^{h}(h\in P)$ with the
following relations:
\begin{align}
q^{0}=1,\  q^{h}q^{h'}=q^{h+h'} \quad (h,h' \in  P),\\
q^{h}e_{i}q^{-h}=q^{\langle h,\alpha_i\rangle}e_{i}  ,\\
q^{h}f_{i}q^{-h}=q^{-\langle h,\alpha_i\rangle}f_{i} ,\\
e_{i}f_{j}-f_{j}e_{i}=\delta_{ij}\frac{q^{\alpha_i}-q^{-\alpha_i}}{q-q^{-1}} ,\\
e_{i}^2e_{j}-(q+q^{-1})e_ie_je_i+e_je_{i}^2=0  \quad (|i-j|=1),\\
f_{i}^2f_{j}-(q+q^{-1})f_if_jf_i+f_jf_{i}^2=0  \quad (|i-j|=1),\\
e_{i}e_j=e_je_i,\  f_if_j=f_jf_i  \quad (|i-j|>1).
\end{align}
The quantum special linear algebra $U_q(sl_n)$ is the subalgebra of $U_q$
generated by $e_i,\ f_i,\ q^{\pm \alpha_i}(i=1,2,\ldots, n-1)$.

%The algebra $U_q(gl_n)$ has a Hopf algebra structure with the
%coproduct $\Delta$, counit $\varepsilon$ and the antipode S defined by
%
%\begin{align}
%\Delta(q^h)=q^h\otimes q^h,\\
%\Delta(e_i)=e_i\otimes q^{-\alpha_i}+1\otimes e_i,\\
%\Delta(f_i)=f_i\otimes 1+q^{\alpha_i}\otimes f_k,\\
%\varepsilon(q^h)=1, \varepsilon(e_k)=\varepsilon(f_i)=0,\\
%S(q^h)=q^{-\lambda},\\
%S(e_i)=-e_iq^{\alpha_i},\\
%S(f_i)=-q^{-\alpha_i}f_i.
%\end{align}
%for $h\in P$, $i=1,2,\ldots, n-1$.
\begin{remark}[\cite{FRT}, Theorem 12]
$U_{q}$ has an alternative representation. It is isomorphic to the algebra generated by
$l_{ij}^{+}$, $l_{ji}^{-}$, $1\leq i \leq j \leq n$ subject to the relations
\begin{align}
RL_1^{\pm}L_2^{\pm}=L_2^{\pm}L_1^{\pm}R\\
RL_1^{+}L_2^{-}=L_2^{-}L_1^{+}R
\end{align}
where
$R=q\sum_{i}e_{ii}\otimes e_{ii}+\sum_{i\neq j}e_{ii}\otimes e_{jj}
+(q-q^{-1})\sum_{i<j}e_{ij}\otimes e_{ji}.$
The isomorphism between this two representations is given by
\begin{align*}
l_{ii}^{\pm}=q^{\pm\epsilon_i},\\
l_{i,i+1}^{+}=(q-q^{-1})q^{\epsilon_i}e_i,\\
l_{i+1,i}^{-}=(q-q^{-1})f_{i}q^{\epsilon_i}.
\end{align*}
\end{remark}

Let  for $m\leqslant n$, $\mathfrak{gl}_{m}$ be the Lie subalgebra
of $\gl (n)$ spanned by $\{ E_{ij}\,|\, i,j=1,\ldots,m \}$. We have the following chain
\begin{equation*}
\gl_1\subset \gl_2\subset \ldots \subset \gl_n.
\end{equation*}
It induces  the chain $U_1\subset$ $U_2\subset$ $\ldots$ $\subset
U_n$ for the universal enveloping algebras  $U_{m}=U(\gl_{m})$, $1\leq m\leq n$. If
By we denote by $(U_m)_q$ the quantum universal enveloping algebra of $\gl_m$. We have the following chain $(U_1)_q\subset$ $(U_2)_q\subset$ $\ldots$ $\subset
(U_n)_q$. Let $Z_{m}$ denotes the center of $(U_{m})_{q}$. The subalgebra of $U_q$ generated by $\{
Z_m\,|\,m=1,\ldots, n \}$ will be called the \emph{Gelfand-Tsetlin
subalgebra} of $U_q$ and will be denoted by  ${\Ga}_q$.
\begin{theorem}[\cite{FRT}, Theorem 14]\label{generators of the quantum center}
The center of $U_{q}(\mathfrak {gl}_m)$ is generated by the following $m+1$ elements

$$c_{mk}=\sum_{\sigma,\sigma'\in S_m}(-q)^{l(\sigma)+l(\sigma')}
l_{\sigma(1),\sigma'(1)}^{+}\cdots l_{\sigma(k),\sigma'(k)}^{+}l_{\sigma(k+1),\sigma'(k+1)}^{-}\cdots l_{\sigma(m),\sigma'(m)}^{-},$$
where $0\leq k \leq m$.
\end{theorem}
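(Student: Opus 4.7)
The plan is to exploit the RLL-presentation of $U_q$ stated in the preceding Remark, treating the $l_{ij}^\pm$ as entries of matrices $L^\pm$ acting on an auxiliary $m$-dimensional space. In this language the relations $R L_1^\pm L_2^\pm = L_2^\pm L_1^\pm R$ and $R L_1^+ L_2^- = L_2^- L_1^+ R$ encode all commutation relations among the generators, and the expressions $c_{mk}$ are precisely the quantum-minor type combinations obtained by antisymmetrising a tensor product of $k$ copies of $L^+$ and $m-k$ copies of $L^-$ over both the row indices (sum over $\sigma$) and the column indices (sum over $\sigma'$) with sign weight $(-q)^{l(\sigma)+l(\sigma')}$. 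Rewritten this way, $c_{mk}$ is essentially $(A_m)_{\text{rows}} \bigl(L_1^+ \cdots L_k^+ L_{k+1}^- \cdots L_m^-\bigr)(A_m)_{\text{cols}}$ where $A_m$ is the $q$-antisymmetriser on $(\C^m)^{\otimes m}$.

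First I would establish centrality of each $c_{mk}$. The key input is the classical identity $A_m R_{ij}(q) = (-q) A_m$ for adjacent factors, which is what makes $A_m$ a one-dimensional projector on the $q$-alternating subspace. To show $c_{mk}$ commutes with an arbitrary generator $l_{ab}^\varepsilon$, I would introduce a fresh auxiliary space labelled $0$ and use the appropriate RLL relation to move an operator $L_0^\varepsilon$ through the product $L_1^+\cdots L_k^+ L_{k+1}^-\cdots L_m^-$, accumulating $R$-matrices on the left and right; sandwiching by $A_m$ on both sides collapses those $R$-matrices to scalars and produces the commutator identity $L_0^\varepsilon \, c_{mk} = c_{mk} \, L_0^\varepsilon$ in auxiliary-space notation, which is equivalent to componentwise commutativity with all $l_{ab}^\varepsilon$. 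The mixed case requires using both RLL relations in sequence, handling the transition point between the $L^+$ and $L^-$ blocks carefully.

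Next I would show that these $m+1$ elements generate the whole centre $Z_m$. For this, I would pass to a PBW-type filtration on $U_q(\gl_m)$ whose associated graded is a quantum analogue of the symmetric algebra; the symbols of the $c_{mk}$ recover, up to invertible scalars, the coefficients of the characteristic polynomial of the classical matrix $E = (E_{ij})$, i.e.\ the standard Casimir generators of the centre of $U(\gl_m)$ together with a grouplike factor corresponding to $q^{\epsilon_1+\cdots+\epsilon_m}$. Since those classical Casimirs together with the determinant grouplike element are algebraically independent and generate the classical centre, a standard filtration-lifting argument forces the $c_{mk}$ to generate $Z_m$.

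The main obstacle I anticipate is the centrality proof: the bookkeeping of $R$-matrix factors across the boundary between the $L^+$-block and the $L^-$-block in the product is delicate, and one has to verify that the scalars produced by absorbing $R$'s into $A_m$ on the left exactly match those produced on the right so that all auxiliary-space dependence cancels. Once that is done, both the algebraic independence and the generation statement follow cleanly from the classical limit.
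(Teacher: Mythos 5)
You should first note that the paper offers no proof of this statement at all: it is quoted verbatim from Faddeev--Reshetikhin--Takhtadzhyan (\cite{FRT}, Theorem 14) and used as a black box, so there is no in-paper argument to compare yours against. Judged on its own terms, your outline follows the standard fusion strategy for centrality (sandwiching the product $L_1^+\cdots L_k^+L_{k+1}^-\cdots L_m^-$ between $q$-antisymmetrisers and absorbing $R$-matrices via $A_mR_{i,i+1}=-qA_m$), and that is indeed the right skeleton. But the step you yourself flag as delicate --- pushing the auxiliary $L_0^{\varepsilon}$ across the boundary between the $L^+$-block and the $L^-$-block --- is exactly where the content of the theorem lives, and your sketch does not resolve it: for the pure cases $k=0,m$ one gets the quantum determinants of $L^{\mp}$ and the argument is routine, whereas for $0<k<m$ one must check that the two RLL relations produce \emph{identical} $R$-factors on both sides of the sandwich so that the auxiliary-space dependence cancels; this requires that the same $\hat R$ conjugates both $L^+L^+$ and $L^+L^-$ pairs, and is not a bookkeeping formality. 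As it stands the centrality claim is asserted rather than proved.

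The second genuine gap is the generation argument. A filtration/classical-limit argument does not directly apply at a fixed generic $q\in\mathbb C^{\times}$: $U_q(\gl_m)$ is not a filtered deformation of $U(\gl_m)$ in a sense that lets you lift "the symbols generate" to "the elements generate" --- the associated graded of the centre need not coincide with the centre of the associated graded, and a degeneration $q\to 1$ says nothing about the algebra at the fixed value of $q$ you are working with. The standard route (and the one consistent with how this paper actually uses the $c_{mk}$, cf.\ Proposition~\ref{eigenvalue}) is the quantum Harish--Chandra homomorphism: one computes that $c_{mk}$ acts on a highest (or lowest) weight vector by an invertible scalar times the elementary symmetric Laurent polynomial $e_k(q^{2\epsilon_1},\dots,q^{2\epsilon_m})$ multiplied by $q^{-(\epsilon_1+\cdots+\epsilon_m)}$, observes that $c_{m0}c_{mm}$ is a nonzero constant so that $c_{m0}$ is invertible in the subalgebra generated by the $c_{mk}$, and checks that these images generate the full $W$-invariant target of the Harish--Chandra map, whose injectivity on the centre then finishes the argument. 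If you replace your filtration paragraph with this, the generation half becomes a complete proof; without it, the claim that "a standard filtration-lifting argument forces the $c_{mk}$ to generate $Z_m$" is not justified.
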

\begin{definition}
\label{definition-of-GZ-modules} A finitely generated $U$-module
$M$ is called a \emph{Gelfand-Tsetlin module (with respect to
$\Ga_q$)} if

\begin{equation}\label{equation-Gelfand-Tsetlin-module-def}
M=\bigoplus_{\sm\in\Sp\Ga_q}M(\sm),
\end{equation}

where $M(\sm)=\{v\in M| \sm^{k}v=0 \text{ for some }k\geq 0\}$. Equivalently,
\begin{equation}\label{equation-Gelfand-Tsetlin-module-def with characters}
M=\bigoplus_{\chi\in\Gamma_q^{*}}M(\chi)
\end{equation}
where
$M(\chi)=\{v\in M: \forall g\in\Gamma_q\text{, }\exists k\in\mathbb{Z}_{>0} \text{ such that } (g-\chi(g))^{k}v=0 \}$.\\
The {\it Gelfand-Tsetlin support} of $M$ is the set
$\Supp_{GT}(M):=\{\chi\in\Ga_q^{*}:M(\chi)\neq 0\}$.
\end{definition}

\begin{lemma}
Any submodule of a Gelfand-Tsetlin module over $U_q$ is a Gelfand-Tsetlin module.
\end{lemma}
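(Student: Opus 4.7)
The plan is to verify both defining properties of a Gelfand-Tsetlin module for any $U_q$-submodule $N \subseteq M$: finite generation over $U_q$, and the decomposition into generalized eigenspaces for $\Ga_q$. Finite generation is immediate from the fact that $U_q = U_q(\gl(n))$ is left Noetherian (a standard result for quantized enveloping algebras of finite-dimensional semisimple Lie algebras), so any $U_q$-submodule of the finitely generated module $M$ is itself finitely generated.

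The real content is therefore the eigenspace decomposition $N = \bigoplus_{\sm \in \Sp \Ga_q} N(\sm)$. The key observation is that $N$ is automatically stable under the commutative subalgebra $\Ga_q \subset U_q$, and the decomposition $M = \bigoplus_\sm M(\sm)$ is already a decomposition of $M$ as a $\Ga_q$-module. In particular $N(\sm) = N \cap M(\sm)$, and because distinct $M(\sm)$'s have trivial intersection in $M$ the sum $\sum_\sm N(\sm) \subseteq N$ is automatically direct. The substantive step is to show that every $v \in N$ lies in this sum.

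So fix $v \in N$ with $M$-decomposition $v = v_{\sm_1} + \cdots + v_{\sm_k}$, where the $\sm_i$ are pairwise distinct maximal ideals and $v_{\sm_i} \in M(\sm_i)$. By the very definition of $M(\sm_i)$ there are integers $k_i \geq 1$ such that $\sm_i^{k_i} v_{\sm_i} = 0$. Because the $\sm_i$ are distinct maximal ideals of $\Ga_q$, the ideals $\sm_i^{k_i}$ are pairwise coprime, and the Chinese Remainder Theorem supplies elements $e_i \in \Ga_q$ with $e_i \equiv 1 \pmod{\sm_i^{k_i}}$ and $e_i \equiv 0 \pmod{\sm_j^{k_j}}$ for $j \neq i$. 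A direct computation then gives $e_i \cdot v = v_{\sm_i}$ (the $j \neq i$ components vanish because $e_i \in \sm_j^{k_j}$ annihilates $v_{\sm_j}$, and the $i$-th component is preserved because $e_i - 1 \in \sm_i^{k_i}$ annihilates $v_{\sm_i}$), placing $v_{\sm_i}$ in $N$ as required.

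I do not expect any serious obstacle; the argument reduces to elementary commutative algebra on $\Ga_q$, with Noetherianity of $U_q$ as the only external ingredient. The one point that deserves some care in the write-up is emphasizing that the ambient decomposition of $M$ is already a $\Ga_q$-module decomposition, so that the CRT argument can be carried out entirely inside $\Ga_q$ without needing any structural information about the noncommutative algebra $U_q$.
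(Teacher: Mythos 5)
Your proof is correct. The paper gives no argument of its own here --- it simply defers to Lemma 3.2 of \cite{FGR2} --- and your argument (Noetherianity of $U_q$ for finite generation, plus the Chinese Remainder Theorem in the commutative algebra $\Ga_q$ to produce elements $e_i$ projecting $v\in N$ onto its generalized $\Ga_q$-eigencomponents, which therefore lie in $N$) is precisely the standard one that the cited lemma carries out.
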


\begin{proof}
Analogous to \cite{FGR2} Lemma 3.2.
\end{proof}

\section{Finite dimensional modules of $U_q$}

In this section we recall the quantum version of a classical result of  Gelfand and  Tsetlin which provides an explicit basis  for every irreducible finite dimensional $U_q$-module.

\begin{definition} For a vector $L=(l_{ij})$ in $\mathbb{C}^{\frac{n(n+1)}{2}}$, by $T(L)$ we will denote the following array with entries $\{l_{ij}:1\leq j\leq i\leq n\}$
\begin{center}

\Stone{\mbox{ \scriptsize {$l_{n1}$}}}\Stone{\mbox{ \scriptsize {$l_{n2}$}}}\hspace{1cm} $\cdots$ \hspace{1cm} \Stone{\mbox{ \scriptsize {$l_{n,n-1}$}}}\Stone{\mbox{ \scriptsize {$l_{nn}$}}}\\[0.2pt]
\Stone{\mbox{ \scriptsize {$l_{n-1,1}$}}}\hspace{1.5cm} $\cdots$ \hspace{1.5cm} \Stone{\mbox{ \tiny {$l_{n-1,n-1}$}}}\\[0.3cm]
\hspace{0.2cm}$\cdots$ \hspace{0.8cm} $\cdots$ \hspace{0.8cm} $\cdots$\\[0.3cm]
\Stone{\mbox{ \scriptsize {$l_{21}$}}}\Stone{\mbox{ \scriptsize {$l_{22}$}}}\\[0.2pt]
\Stone{\mbox{ \scriptsize {$l_{11}$}}}\\
\medskip
\end{center}
such an array will be called a \emph{Gelfand-Tsetlin tableau} of height $n$. A Gelfand-Tsetlin tableau of height $n$ is called \emph{standard} if
$l_{ki}-l_{k-1,i}\in\mathbb{Z}_{\geq 0}$ and $l_{k-1,i}-l_{k,i+1}\in\mathbb{Z}_{>0}$ for all $1\leq i\leq k\leq n-1$.
\end{definition}

The following theorem discribes the Gelfand-Tsetlin approach for simple finite dimensional $U_q$ modules with a given highest weight.

\begin{theorem}[\cite{UST2}, Theorem 2.11]\label{Theorem: quantum GT theorem}
Let $L(\lambda)$ be the finite dimensional irreducible module over $U_q$ of highest weight $\lambda=(\lambda_{1},\ldots,\lambda_{n})$. Then there exist a basis of $L(\lambda)$ consisting of all standard tableaux $T(L)$ with fixed top row $l_{nj}=\lambda_j-j$. Moreover,  the action of the generators of $U_q$ on $L(\lambda)$ is given by the  \emph{Gelfand-Tsetlin formulae}:
\begin{equation}\label{Gelfand-Tsetlin formulas}
\begin{split}
q^{\epsilon_{k}}(T(L))&=q^{a_k}T(L),\quad a_k=\sum_{i=1}^{k}l_{k,i}-\sum_{i=1}^{k-1}l_{k-1,i}+k,\ k=1,\ldots,n,\\
e_{k}(T(L))&=-\sum_{j=1}^{k}
\frac{\prod_{i} [l_{k+ 1,i}-l_{k,j}]_q}{\prod_{i\neq j} [l_{k,i}-l_{k,j}]_q}
T(L+\delta^{kj}),\\
f_{k}(T(L))&=\sum_{j=1}^{k}\frac{\prod_{i} [l_{k-1,i}-l_{k,j}]_q}{\prod_{i\neq j} [l_{k,i}-l_{k,j}]_q}T(L-\delta^{kj}).\\
\end{split}
\end{equation}
\end{theorem}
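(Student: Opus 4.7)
The plan is to proceed by induction on $n$ along the chain $(U_1)_q\subset(U_2)_q\subset\cdots\subset(U_n)_q$. The base case $n=1$ is immediate: $L(\lambda)$ is one-dimensional, indexed by the single tableau with entry $l_{11}=\lambda_1-1$, and $q^{\epsilon_1}$ acts by $q^{\lambda_1}$.

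For the inductive step, the key input is the quantum branching rule. Since $q$ is not a root of unity, finite-dimensional $U_q$-modules are semisimple and restriction to $(U_{n-1})_q$ decomposes multiplicity-freely as
\begin{equation*}
L(\lambda)\big|_{(U_{n-1})_q}\;\cong\;\bigoplus_{\mu}L(\mu),
\end{equation*}
with $\mu=(\mu_1,\ldots,\mu_{n-1})$ ranging over integer tuples satisfying the classical interlacing $\lambda_i\geq\mu_i\geq\lambda_{i+1}$. This follows either from a character/Weyl-dimension comparison or by specialization from the classical branching law for $\mathfrak{gl}_n\supset\mathfrak{gl}_{n-1}$. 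Writing $l_{n-1,i}=\mu_i-i$ converts the interlacing into the standardness conditions $l_{n,i}-l_{n-1,i}\in\mathbb{Z}_{\geq 0}$ and $l_{n-1,i}-l_{n,i+1}\in\mathbb{Z}_{>0}$ for the top two rows. Iterating produces a basis of $L(\lambda)$ indexed by all standard Gelfand-Tsetlin tableaux with prescribed top row $l_{nj}=\lambda_j-j$.

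Given the basis, the action of $q^{\epsilon_k}$ is a direct weight calculation: the $\epsilon_k$-component of the weight of $T(L)$ equals $\sum_{i=1}^{k}l_{k,i}-\sum_{i=1}^{k-1}l_{k-1,i}+k$, which is the exponent $a_k$ in the formula. For $k\leq n-2$, the formulas for $e_k,f_k$ on $T(L)$ follow from the induction hypothesis applied inside each $(U_{n-1})_q$-summand $L(\mu)$.

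The main obstacle is therefore the action of $e_{n-1}$ and $f_{n-1}$, the only generators mixing different $(U_{n-1})_q$-summands. Two structural observations drastically cut down the unknowns: $e_{n-1}$ has weight $\alpha_{n-1}=\epsilon_{n-1}-\epsilon_n$, so in view of the weight formula it can only shift one entry $l_{n-1,j}$ by $+1$; moreover, commutation with the quantum Casimirs $c_{n-1,k}$ of Theorem~\ref{generators of the quantum center} (which act by scalars on each $L(\mu)$) confirms that $e_{n-1}\cdot T(L)$ is a linear combination $\sum_{j=1}^{n-1}c_j\,T(L+\delta^{n-1,j})$, with a dual expression for $f_{n-1}$. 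The unknown scalars are then pinned down by the Drinfeld--Jimbo commutator
\begin{equation*}
e_{n-1}f_{n-1}-f_{n-1}e_{n-1}\;=\;\frac{q^{\alpha_{n-1}}-q^{-\alpha_{n-1}}}{q-q^{-1}}
\end{equation*}
together with the quantum Serre relations, yielding a linear system whose unique solution is the ratio of products of $q$-brackets appearing in (\ref{Gelfand-Tsetlin formulas}). The genuinely delicate step is the underlying $q$-Vandermonde-type identity which guarantees that the candidate scalars make the commutator act by $[a_{n-1}-a_n]_q$ on every standard tableau; this is the computation carried out in \cite{UST2}.
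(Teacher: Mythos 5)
The paper offers no proof of this statement at all: it is imported verbatim from \cite{UST2} (Theorem 2.11), so there is no internal argument to compare yours against. Judged on its own, your outline follows the standard route (branching rule, multiplicity-free restriction along $(U_1)_q\subset\cdots\subset(U_n)_q$, weight computation for $q^{\epsilon_k}$, then pinning down $e_{n-1},f_{n-1}$), which is indeed how the cited reference proceeds. But as a self-contained proof it has two real gaps.

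First, the claim that $e_{n-1}\cdot T(L)$ is a combination of \emph{single} tableaux $T(L+\delta^{n-1,j})$ does not follow from the weight of $\alpha_{n-1}$ together with the Casimirs $c_{n-1,k}$ alone: those only constrain the row sums and the $(U_{n-1})_q$-summand, not the lower rows of the tableau. What you actually need is that $e_{n-1}$ commutes with all of $(U_{n-2})_q$, so its $L(\mu)\to L(\mu')$ components are $(U_{n-2})_q$-intertwiners, and then multiplicity-one of the iterated branching plus Schur's lemma forces each component to send $T(L)$ to a scalar multiple of the tableau with the same rows $1,\dots,n-2$. Second, the coefficients are \emph{not} the unique solution of a linear system: rescaling each basis tableau leaves all relations invariant, so the commutator $e_{n-1}f_{n-1}-f_{n-1}e_{n-1}=[q^{\alpha_{n-1}};q^{-\alpha_{n-1}}]/(q-q^{-1})$ only determines the products of matched $e$- and $f$-coefficients via a recursion; one must choose a normalization to split them as in (\ref{Gelfand-Tsetlin formulas}), and then still verify the remaining defining relations (the quantum Serre relations and the commutators of $e_{n-1},f_{n-1}$ with $e_{n-2},f_{n-2}$) via the $q$-Vandermonde-type identities. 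That verification is the substantive content of the theorem, and your sketch defers it back to \cite{UST2}, which makes the attempt an outline of the known proof rather than a proof.
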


The next proposition gives the explicit action of the generators of $\Gamma_q$.
\begin{proposition}\label{eigenvalue}
The generator $c_{nk}$ of $\Gamma_q$ acts on $T(L)$ as a scalar multiplication by
$$\gamma_{nk}(L)=(k)_{q^{-2}}!(n-k)_{q^{-2}}!q^{k(k+1)+\frac{n(n-3)}{2}}\sum_{\tau}
q^{\sum_{i=1}^{k}l_{n\tau(i)}-\sum_{i=k+1}^{n}l_{n\tau(i)}}$$
where $\tau\in S_n$ is such that $\tau(1)<\cdots<\tau(k), \tau(k+1)<\cdots<\tau(n)$.
\end{proposition}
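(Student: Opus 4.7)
The strategy is to exploit that $c_{nk}\in Z_n\subset\Gamma_q$ is central in $U_q(\gl_n)$ by Theorem~\ref{generators of the quantum center}, so it acts by a scalar on the finite-dimensional irreducible module $L(\lambda)$ of highest weight $\lambda=(l_{n1}+1,\dots,l_{nn}+n)$ in which $T(L)$ sits, and this scalar depends only on the top row of $T(L)$. It therefore suffices to compute $c_{nk}\,v_\lambda$, where $v_\lambda$ is the highest-weight vector, equivalently the standard tableau with $l_{ki}=l_{ni}$ for all $1\le i\le k\le n$.

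The first step is to compute the Harish-Chandra image $\pi(c_{nk})\in U_q^0$, where $\pi$ is the projection along the triangular decomposition of $U_q$. Using $l^\pm_{ii}=q^{\pm\epsilon_i}$ together with the fact that off-diagonal $l^+_{ij}$ ($i<j$) lie in $U_q^0 U_q^+$ and $l^-_{ij}$ ($i>j$) in $U_q^- U_q^0$, and applying the $RLL$-relations to put each monomial $l^+_{\sigma(1),\sigma'(1)}\cdots l^-_{\sigma(n),\sigma'(n)}$ into PBW normal form, one identifies the surviving contribution to $\pi(c_{nk})$ with the diagonal sum
\[
\pi(c_{nk})\;=\;\sum_{\sigma\in S_n} q^{2\ell(\sigma)}\, q^{\epsilon_{\sigma(1)}+\cdots+\epsilon_{\sigma(k)}-\epsilon_{\sigma(k+1)}-\cdots-\epsilon_{\sigma(n)}},
\]
the Cartan contributions generated by off-diagonal $l^+_{ij}l^-_{ji}$ pairs cancelling in the signed sum $(-q)^{\ell(\sigma)+\ell(\sigma')}$ over $\sigma\neq\sigma'$.

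The second step is to reorganise this sum via the coset decomposition $S_n=\mathrm{Sh}_{k,n-k}\cdot(S_k\times S_{n-k})$ into $(k,n-k)$-shuffles $\tau$ (with $\tau(1)<\cdots<\tau(k)$, $\tau(k+1)<\cdots<\tau(n)$) times block permutations $w$, with $\ell(\sigma)=\ell(\tau)+\ell(w)$ and $\{\sigma(1),\dots,\sigma(k)\}=\{\tau(1),\dots,\tau(k)\}$. Evaluating $q^{\epsilon_i}$ on $v_\lambda$ by $q^{\lambda_i}$, summing out $w\in S_k\times S_{n-k}$ via the identity $\sum_{w\in S_m}q^{2\ell(w)}=q^{m(m-1)}(m)_{q^{-2}}!$, substituting $\lambda_i=l_{ni}+i$, and using the shuffle-length formula $\ell(\tau)=\sum_{a\in A}a-k(k+1)/2$ (with $A=\{\tau(1),\dots,\tau(k)\}$) to gather the residual $\tau$-dependent $q$-powers into the $\tau$-independent prefactor $q^{k(k+1)+\frac{n(n-3)}{2}}$, one recovers exactly the formula of the proposition.

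The main obstacle is the first step. Verifying that the signed sum $\sum_{\sigma\neq\sigma'}(-q)^{\ell(\sigma)+\ell(\sigma')}\,l^+_{\sigma(1),\sigma'(1)}\cdots l^-_{\sigma(n),\sigma'(n)}$ reduces modulo $U_q^- U_q^0+ U_q^0 U_q^+$ to zero requires systematic use of the $RLL$-commutation relations to normal-order mixed products of $l^+$ and $l^-$ entries, and a careful combinatorial bookkeeping of the $[e_a,f_a]$-type Cartan corrections that these commutations generate: one must show that these corrections conspire to cancel pairwise under the signed summation. Once the Harish-Chandra image is pinned down, the remainder is a routine manipulation of $q$-factorials.
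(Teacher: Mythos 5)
Your overall architecture --- centrality of $c_{nk}$, evaluation on an extreme weight vector, the shuffle decomposition $S_n=\mathrm{Sh}_{k,n-k}\cdot(S_k\times S_{n-k})$, and the $q$-factorial identity --- parallels the paper's, and your second step is essentially the computation the paper performs. The genuine gap is in your first step, which you yourself flag as ``the main obstacle'': the claim that $c_{nk}$ reduces to the diagonal sum $\sum_{\sigma}q^{2l(\sigma)}q^{\epsilon_{\sigma(1)}+\cdots+\epsilon_{\sigma(k)}-\epsilon_{\sigma(k+1)}-\cdots-\epsilon_{\sigma(n)}}$ modulo terms acting trivially is asserted, not proved, and establishing it by normal-ordering all mixed $l^{+}l^{-}$ monomials via the $RLL$-relations while tracking the cancellation of the resulting Cartan corrections is precisely the hard part of the proposition. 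The paper sidesteps this computation entirely by evaluating $c_{nk}$ on the \emph{lowest} weight vector rather than the highest one: in each monomial of $c_{nk}$ all $l^{-}$ factors stand to the right, every strictly lower-triangular $l^{-}_{ab}$ ($a>b$) annihilates a lowest weight vector, and identically $l^{+}_{ab}=0$ for $a>b$ and $l^{-}_{ab}=0$ for $a<b$; hence the surviving terms must have $\sigma(i)=\sigma'(i)$ for $i>k$ and $\sigma(i)\leq\sigma'(i)$ for $i\leq k$, which forces $\sigma=\sigma'$ by comparing the sets $\{\sigma(1),\ldots,\sigma(k)\}$ and $\{\sigma'(1),\ldots,\sigma'(k)\}$. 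No commutation relations and no Harish--Chandra projection are needed.

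There is moreover a concrete arithmetic inconsistency between your two steps as written. Evaluating your claimed diagonal sum at the highest weight gives the $\tau$-dependent factor $q^{2l(\tau)}\,q^{\sum_{i\leq k}\lambda_{\tau(i)}-\sum_{i>k}\lambda_{\tau(i)}}$, and substituting $\lambda_j=l_{nj}+j$ together with the shuffle-length formula $l(\tau)=\sum_{i\leq k}\tau(i)-k(k+1)/2$ turns the integer parts into $q^{2l(\tau)+k(k+1)-n(n+1)/2}$; the total residual is then $q^{4l(\tau)}$ times a constant, which does \emph{not} gather into a $\tau$-independent prefactor. The cancellation only works when the length enters with the opposite sign, i.e.\ as $q^{n(n-1)-2l(\sigma)}$, which is exactly what the paper obtains after the lowest-weight relabeling $a_{\sigma(i)}=\lambda_{n+1-\sigma(i)}$ (replacing $\sigma$ by its composition with the longest element). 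So either your Harish--Chandra image carries the wrong power of $q^{l(\sigma)}$ or the evaluation point is wrong; in either case steps one and two, as stated, do not compose to yield the formula of Proposition~\ref{eigenvalue}.
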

\begin{proof}
Analogous to \cite{JR} Theorem 5.1.
Choose a lowest weight vector $v=T(L)$ in $L(\lambda)$, the entries of $T(L)$ should satisfy $l_{ij}=l_{i+1,j+1}+1$ for any $i,j$. Note that the generators $l_{ij}^{+}$, $l_{ji}^{-}$ belong to the upper and lower Borel subalgebra generated by $e_i, f_i$  respectively.
The element $l_{\sigma(k+1),\sigma'(k+1)}^{-}\cdots l_{\sigma(n),\sigma'(n)}^{-}$ kills $v$ unless
$\sigma_{k+1}=\sigma_{k+1}',\ldots,\sigma_{n}=\sigma_{n}'$. But $\sigma_{1}\leq\sigma_{1}',\ldots,\sigma_{k}\leq\sigma_{k}'$, so one must have $\sigma_{i}=\sigma_{i}'$
for all $1\leq i \leq n$ in the action of $c_{nk}$ on $v$. We thus have
\begin{align}
c_{nk}v&=\sum_{\sigma\in S_n}q^{2l(\sigma)}q^{a_{\sigma(1)}+\cdots +a_{\sigma(k)}-a_{\sigma(k+1)}-\cdots-a_{\sigma(n)}}v
\end{align}
where
\begin{align*}
a_{\sigma(i)}&=\sum_{j=1}^{\sigma(i)}l_{\sigma(i),j}-\sum_{j=1}^{\sigma(i)-1}l_{\sigma(i)-1,j}+\sigma(i)\\
&=l_{\sigma(i),1}+1\\
&=\lambda_{n+1-\sigma(i)}.
\end{align*}

Then
\begin{align*}
\gamma_{nk}(\lambda)=&\sum_{\sigma\in S_n}q^{2l(\sigma)}q^{a_{\sigma(1)}+\cdots +a_{\sigma(k)}-a_{\sigma(k+1)}-\cdots-a_{\sigma(n)}}\\
&=\sum_{\sigma\in S_n}q^{2l(\sigma)}q^{\lambda_{n+1-\sigma(1)}+\cdots +\lambda_{n+1-\sigma(k)}-\lambda_{n+1-\sigma(k+1)}-\cdots-\lambda_{n+1-\sigma(n)}}\\
&=\sum_{\sigma\in S_n}q^{n(n-1)-2l(\sigma)}q^{\lambda_{\sigma(1)}+\cdots +\lambda_{\sigma(k)}-\lambda_{\sigma(k+1)}-\cdots-\lambda_{\sigma(n)}}.
\end{align*}

Let $\tau$ be a permutation in $S_n$ such that $\tau(1)<\cdots<\tau(k),\  \tau(k+1)<\cdots<\tau(n)$. One has that
\begin{align*}
\gamma_{nk}(\lambda)&=(k)_{q^{-2}}!(n-k)_{q^{-2}}!q^{n(n-1)}\sum_{\tau}q^{-2l(\tau)}q^{\lambda_{\tau(1)}+\cdots +\lambda_{\tau(k)}-\lambda_{\tau(k+1)}-\cdots-\lambda_{\tau(n)}}\\
&=(k)_{q^{-2}}!(n-k)_{q^{-2}}!q^{n(n-1)}\sum_{\tau}
q^{-2\sum_{i=1}^{k}(\tau(i)-i)+\sum_{i=1}^{k}(l_{n\tau(i)}+\tau(i))-\sum_{i=k+1}^{n}(l_{n\tau(i)}+\tau(i))}\\
&=(k)_{q^{-2}}!(n-k)_{q^{-2}}!q^{k(k+1)+\frac{n(n-3)}{2}}
\sum_{\tau}q^{\sum_{i=1}^{k}l_{n\tau(i)}-\sum_{i=k+1}^{n}l_{n\tau(i)}}
\end{align*}
\end{proof}
\begin{corollary}
The generator $c_{mk}$ of $\Gamma_q$ acts on $T(L)$ as multiplication by
\begin{equation}\label{eigenvalues of gamma_mk}
\gamma_{mk}(\lambda)=(k)_{q^{-2}}!(m-k)_{q^{-2}}!q^{k(k+1)+\frac{m(m-3)}{2}}\sum_{\tau}
q^{\sum_{i=1}^{k}l_{m\tau(i)}-\sum_{i=k+1}^{m}l_{m\tau(i)}}
\end{equation}
where $\tau\in S_m$ is such that $\tau(1)<\cdots<\tau(k), \tau(k+1)<\cdots<\tau(m)$.
\end{corollary}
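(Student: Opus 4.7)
The strategy is to reduce to Proposition~\ref{eigenvalue} via the chain $(U_1)_q \subset \cdots \subset (U_n)_q$ and the fact that $c_{mk}$ belongs to the center $Z_m$ of $(U_m)_q$. The key observation is that the subspace $V(L) \subseteq L(\lambda)$ spanned by all standard tableaux which coincide with $T(L)$ in rows $m+1, \ldots, n$ is a $(U_m)_q$-submodule of $L(\lambda)$. Indeed, inspection of the Gelfand--Tsetlin formulas \eqref{Gelfand-Tsetlin formulas} shows that for $1 \leq k \leq m-1$ the operators $e_k$ and $f_k$ modify only row $k$ and require as input entries of rows $k-1, k, k+1$, all of which lie in the block of rows $\leq m$; likewise $q^{\epsilon_k}$ with $k \leq m$ depends only on entries in rows $\leq m$. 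In particular no operator from $(U_m)_q$ involves rows $m+1, \ldots, n$, so the rows above row $m$ behave as inert parameters.

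By Theorem~\ref{Theorem: quantum GT theorem} applied at level $m$, $V(L)$ is isomorphic to the finite-dimensional irreducible $(U_m)_q$-module whose top row is the $m$-th row $(l_{m,1}, \ldots, l_{m,m})$ of $T(L)$, with Gelfand--Tsetlin basis indexed by the standard tableaux of height $m$ whose top row coincides with row $m$ of $T(L)$. Since $c_{mk} \in Z_m$, Schur's lemma forces $c_{mk}$ to act on $V(L)$ by a scalar.

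To identify that scalar I would apply Proposition~\ref{eigenvalue} verbatim at level $m$: the proof given there is entirely internal to the top row and to the central element $c_{nk}$ of $(U_n)_q$, so replacing $n$ by $m$ everywhere and the top-row entries $l_{nj}$ by the $m$-th row entries $l_{mj}$ yields exactly formula \eqref{eigenvalues of gamma_mk}. There is no substantive obstacle; the only point requiring care is the row-decoupling asserted above, which is an immediate inspection of \eqref{Gelfand-Tsetlin formulas} and of the fact that $(U_m)_q$ is generated by $e_i, f_i, q^{\pm\alpha_i}$ with $i \leq m-1$ together with $q^{\pm \epsilon_m}$.
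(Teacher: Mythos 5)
Your argument is correct and is essentially an expanded version of the paper's own one-line proof: restrict to $(U_m)_q$, observe that the Gelfand--Tsetlin formulas for its generators never involve rows above row $m$, and apply Proposition~\ref{eigenvalue} with $n$ replaced by $m$ and the top row replaced by the $m$-th row. One small indexing slip: for $V(L)$ to be a \emph{single} irreducible $(U_m)_q$-module with top row equal to row $m$ of $T(L)$ you should fix rows $m,m+1,\dots,n$ rather than only $m+1,\dots,n$ (the $(U_m)_q$-generators $e_k,f_k$ with $k\leq m-1$ never alter row $m$, so as literally defined your $V(L)$ is a direct sum of such irreducibles over the admissible $m$-th rows); this does not affect the conclusion, since $c_{mk}$ acts by the stated scalar on the summand containing $T(L)$.
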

\begin{proof}
Follows directly from Theorem \ref{eigenvalue} and the fact that eigenvalues of $\gamma_{nk}$ depend only on the $n$-th row of the tableau.
\end{proof}

\section{Generic Gelfand-Tsetlin modules of $U_q$}

Recall that $1(q)$ stands for the set of all complex $x$ such that $q^{x}=1$.
\begin{definition}\label{Definition: q-generic tableau}
A Gelfand-Tsetlin tableau $T(L)$ is called \emph{$q$-generic} if
it satisfies the following defining conditions:
$$l_{ij}-l_{ik}\notin \frac{1(q)}{2}+ \mathbb{Z} \text{ for all }1\leq i\leq n \text{ and } k\neq j.$$
By ${\mathcal B}(T(L))$ we will denote the set of all Gelfand-Tsetlin tableaux $T(R)$ of height $n$ satisfying $r_{nj}=l_{nj}$ and  $r_{ij}-l_{ij}\in\mathbb{Z}$ for $1\leq j\leq i \leq n-1$.
\end{definition}

\begin{theorem}[\cite{MT} Theorem 2]\label{Theorem: GT theorem q-generic}
Let $T(L)$ be a generic tableau, the vector space $V(T(L)) = \Span {\mathcal B}(T(L))$ has  a structure of a $U_q$-module of finite length with action of the generators of $U_q$ given by the Gelfand-Tsetlin formulae (\ref{Gelfand-Tsetlin formulas}).
\end{theorem}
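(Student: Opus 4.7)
The plan is to define the $U_q$-action on $V(T(L)) = \Span \mathcal{B}(T(L))$ directly by the formulas \eqref{Gelfand-Tsetlin formulas} of Theorem~\ref{Theorem: quantum GT theorem} and then to check separately (i) well-definedness, (ii) the defining relations of $U_q$, and (iii) finite length. Well-definedness is immediate from genericity: the condition $l_{ij} - l_{ik} \notin \tfrac{1(q)}{2} + \mathbb Z$ is invariant under integer shifts of the entries, so it persists for every $T(L') \in \mathcal{B}(T(L))$, whence every denominator $\prod_{i\neq j}[l_{k,i} - l_{k,j}]_q$ in \eqref{Gelfand-Tsetlin formulas} is nonzero, and the formulas define honest linear endomorphisms of $V(T(L))$.

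To verify the defining relations of $U_q$, I would use a Zariski-density argument. Applied to a given basis tableau $T(L)$, each relation unfolds via \eqref{Gelfand-Tsetlin formulas} into a finite linear combination of tableaux whose coefficients are explicit rational functions in finitely many of the entries $l_{ij}$, with denominators built from the nonvanishing factors $[l_{k,i}-l_{k,j}]_q$. For the relation to hold on $T(L)$ every such coefficient must vanish. By Theorem~\ref{Theorem: quantum GT theorem}, exactly these formulas define an honest $U_q$-action on each finite-dimensional irreducible $L(\lambda)$, so each rational coefficient vanishes at every parameter point corresponding to a standard tableau of some $L(\lambda)$. As $\lambda$ ranges over all dominant integral weights, this zero set is Zariski-dense in the ambient affine space, so each rational coefficient vanishes identically and, in particular, at our generic $L$.

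For finite length, I would exploit genericity to obtain a clean Gelfand-Tsetlin character decomposition. Two tableaux $T(L'), T(L'') \in \mathcal{B}(T(L))$ share the same $\Gamma_q$-character \eqref{eigenvalues of gamma_mk} iff, for every row $m$, the entries coincide as multisets; but since $l_{m,i}-l_{m,j} \notin \mathbb Z$ for $i\neq j$, integer shifts of a generic row cannot produce such a coincidence nontrivially. Hence the characters $\chi_{L'}$ are pairwise distinct and $V(T(L))$ is a genuine $\Gamma_q$-weight module with one-dimensional weight spaces. Since $e_k$ and $f_k$ shift exactly one entry of row $k$ by $\pm 1$, an irreducible subquotient is spanned by the tableaux reachable from a fixed one through sequences of nonzero applications of the generators, and a combinatorial analysis of when the numerator factors $\prod_i [l_{k\pm 1,i}-l_{k,j}]_q$ vanish yields only finitely many such equivalence classes, hence finite length.

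The hardest step is expected to be the density argument for the relations: one must first check that the support of each relation applied to $T(L)$ involves only finitely many entries $l_{ij}$, so that the relevant coefficients are genuine rational functions in a fixed finite set of variables, and then confirm that the locus of standard-tableau parameters, as $\lambda$ varies over all dominant integral weights, is Zariski-dense in the affine space of entries. Once this density is in hand, the remaining combinatorial analysis for finite length is a standard technique in the theory of Gelfand-Tsetlin modules, adapted routinely to the quantum setting.
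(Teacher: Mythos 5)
The paper does not actually prove this statement: it is imported wholesale from Mazorchuk--Turowska (\cite{MT}, Theorem 2), so there is no internal proof to compare against. Your reconstruction is nevertheless essentially the standard argument used for such results (both in \cite{MT} and in the classical analogue \cite{FGR2}): nonvanishing of denominators from genericity, verification of the defining relations by specialization to the finite-dimensional modules of Theorem~\ref{Theorem: quantum GT theorem} plus a density argument, and finite length via separation of tableaux by $\Gamma_q$. Two points need more care than you give them. First, the coefficients are \emph{not} rational functions of the entries $l_{ij}$ themselves but of $q^{\pm l_{ij}}$ (the brackets $[x]_q$ depend on $l_{ij}$ only through $q^{l_{ij}}$), so the density argument must be run in the torus of exponentiated variables; it succeeds because the set of standard integer patterns contains a translate of a full-dimensional cone and $q^{\mathbb Z}$ is infinite when $q$ is not a root of unity --- and it genuinely fails at roots of unity, which is why the paper has to treat that case by a separate remark in Section~\ref{section-root}. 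Second, when you extract the coefficient identities from $L(\lambda)$ you are only entitled to identities on the span of \emph{standard} tableaux; the composite coefficients of a relation are sums over intermediate tableaux, some of which are non-standard, and one must check that the Gelfand--Tsetlin coefficients leading out of the standard region vanish so that the identity obtained at standard points really is the full rational-function identity. Your finite-length argument is the vaguest part, but it is exactly what the paper later makes precise: separation of characters is Proposition~\ref{Proposition: separation of tableaux} (note the correct condition is $l_{mi}-l_{mj}\notin\frac{1(q)}{2}+\mathbb Z$, not $\notin\mathbb Z$), and the finiteness of the number of ``reachability classes'' is the content of Theorem~\ref{basis of submodule generated by T} and Theorem~\ref{the-main}(ii) via the finitely many possible sets $\Omega^{+}(T(R))$. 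With those repairs your outline is a correct proof strategy.
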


\begin{proposition}\label{Proposition: separation of tableaux}
The Gelfand-Tsetlin subalgebra $\Gamma_q$ separate the tableaux in $V(T(L))$. That is, for any two different tableaux  in $V(T(L))$, there exists an element in $\Gamma_{q}$ with different eigenvalues corresponding to the tableaux.
\end{proposition}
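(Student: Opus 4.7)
The plan is to show that each basis tableau $T(R)\in\mathcal B(T(L))$ is a joint eigenvector for $\Gamma_q$, compute the eigenvalues explicitly, and then use the genericity of $T(L)$ to deduce that distinct tableaux carry distinct tuples of eigenvalues. The first ingredient is that Proposition~\ref{eigenvalue} and its Corollary extend from the finite-dimensional setting to $V(T(L))$: since $c_{mk}$ is central in $(U_m)_q$ and the action of $(U_m)_q$ on the span of the $m$-row tableaux is determined by the Gelfand--Tsetlin formulae of Theorem~\ref{Theorem: GT theorem q-generic}, the same computation on the lowest weight vector of an appropriate generic $(U_m)_q$-submodule that was carried out in Proposition~\ref{eigenvalue} still applies and yields $c_{mk}T(R)=\gamma_{mk}(R)T(R)$, with $\gamma_{mk}(R)$ given by formula~(\ref{eigenvalues of gamma_mk}) evaluated on the $m$-th row of $R$.

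Rewriting the inner sum in (\ref{eigenvalues of gamma_mk}) as a sum over $k$-subsets $S\subset\{1,\dots,m\}$, one obtains, up to a nonzero constant that depends only on $m$ and $k$,
\[
\gamma_{mk}(R)\;=\;q^{-\sum_{i}r_{mi}}\,e_k\!\bigl(q^{2r_{m1}},\dots,q^{2r_{mm}}\bigr),
\]
where $e_k$ denotes the $k$-th elementary symmetric polynomial. Letting $k$ range over $0,\dots,m$, the value $\gamma_{m0}(R)$ recovers $q^{\sum_{i}r_{mi}}$, and the remaining $\gamma_{mk}(R)$ then determine each $e_k(q^{2r_{m1}},\dots,q^{2r_{mm}})$. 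By the fundamental theorem of symmetric functions, the multiset $\{q^{2r_{m1}},\dots,q^{2r_{mm}}\}$ is reconstructed from the eigenvalues of $c_{m0},\dots,c_{mm}$.

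To finish, suppose $T(R)\neq T(R')$ in $\mathcal B(T(L))$ and pick a row $m$ for which their $m$-th rows differ. If every element of $\Gamma_q$ had the same eigenvalue on $T(R)$ and $T(R')$, the previous paragraph would give $\{q^{2r_{mi}}\}_i=\{q^{2r'_{mi}}\}_i$ as multisets, hence $\sigma\in S_m$ with $q^{2r_{mi}}=q^{2r'_{m\sigma(i)}}$ for all $i$. Whenever $\sigma(i)\neq i$, writing $r_{mi}-r'_{m\sigma(i)}=(l_{mi}-l_{m\sigma(i)})+(a-b)$ with $a,b\in\mathbb Z$ would yield $l_{mi}-l_{m\sigma(i)}\in\tfrac{1(q)}{2}+\mathbb Z$, contradicting the genericity of $T(L)$; thus $\sigma=\mathrm{id}$. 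Then $2(r_{mi}-r'_{mi})\in 1(q)$ together with $r_{mi}-r'_{mi}\in\mathbb Z$ forces $r_{mi}=r'_{mi}$ (since $q$ is not a root of unity in the present section; the root-of-unity case is postponed to Section~\ref{section-root}), contradicting the choice of~$m$.

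The main obstacle is precisely the arithmetic step at the end: because the symmetric functions arising from $\Gamma_q$ are symmetric in the \emph{squares} $q^{2r_{mi}}$ rather than in the $q^{r_{mi}}$ themselves, one must exclude $\tfrac{1(q)}{2}+\mathbb Z$ and not merely $\mathbb Z$ in the definition of $q$-genericity, and exactly this is what allows the trivial permutation to be forced and the rows to be pinned down entry by entry. Verifying carefully that the eigenvalue formula of Proposition~\ref{eigenvalue} genuinely extends to every $T(R)\in V(T(L))$ is the other delicate point; the remaining combinatorics are an immediate consequence of the fundamental theorem of symmetric functions.
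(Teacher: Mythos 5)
Your proof is correct and follows essentially the same route as the paper: both deduce from the eigenvalues of the $c_{mk}$ that the multiset $\{q^{2r_{m1}},\dots,q^{2r_{mm}}\}$ is determined, then use $q$-genericity to force the identity permutation and pin down each entry. You merely make explicit, via elementary symmetric polynomials and $\gamma_{m0}$, the step the paper dismisses as ``easy to see,'' and you rightly flag the tacit extension of Proposition~\ref{eigenvalue} to generic tableaux and the implicit non-root-of-unity assumption, both of which the paper also leaves unaddressed.
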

\begin{proof}
Let $T(R)$ and $T(S)$ be two tableaux with different $m$-th row. Assume $T(R)$ and $T(S)$ have the same eigenvalue for any element in $\Gamma_q$ . It is easy to see from (\ref{eigenvalue}) that $(q^{2s_{m1}},\ldots,q^{2s_{mm}})$ is a permutation of $(q^{2r_{m1}},\ldots,q^{2r_{mm}})$. Therefore, for any $r_{mi}$, there exist $j$ such that $q^{2r_{mi}}=q^{2s_{mj}}$, which implies that $r_{mi}-s_{mj}\in\frac{1(q)}{2}$. This lead to $i=j$ and $r_{mi}=s_{mj}$ which is a contradiction.
\end{proof}

\subsection{Classification of irreducible generic Gelfand-Tsetlin $U_q$-modules}

We recall the following result of Mazorchuk and Turowska.

\begin{theorem}[\cite{MT} Proposition 2]\label{unique GT}
If $\sn\in\Sp\Gamma$ is generic, then there exists a unique irreducible Gelfand-Tsetlin module $N$ such that $N(\sn)\neq 0$.
\end{theorem}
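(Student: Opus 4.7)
The plan is to realise the required irreducible module as a specific subquotient of the finite-length module $V(T(L))$ provided by Theorem~\ref{Theorem: GT theorem q-generic}, where $T(L)$ is the generic tableau corresponding to $\sn$. \emph{Existence} is nearly automatic: Proposition~\ref{eigenvalue} together with its corollary show that every generic $\sn\in\Specm\Gamma_q$ arises as the $\Gamma_q$-character $\chi_L$ of some generic tableau $T(L)$, and by construction $T(L)\in V(T(L))$ is an eigenvector for that character. Since $V(T(L))$ has finite length, any composition factor containing the image of $T(L)$ is an irreducible Gelfand-Tsetlin module $N$ with $N(\sn)\neq 0$.

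For \emph{uniqueness}, let $N$ be any irreducible Gelfand-Tsetlin module with $N(\sn)\neq 0$ and fix a nonzero $v\in N(\sn)$. First, I would replace $v$ by an honest $\Gamma_q$-eigenvector: the argument of Proposition~\ref{Proposition: separation of tableaux} shows that distinct tableau characters appearing in a generic Gelfand-Tsetlin support cannot be identified by $\Gamma_q$, so the action of $\Gamma_q$ on $N(\sn)$ has no Jordan blocks and $N(\sn)$ splits as a direct sum of one-dimensional eigenlines. Next, I would build a $U_q$-linear map $\varphi\colon V(T(L))\to N$ with $\varphi(T(L))=v$ by defining $\varphi$ inductively on the tableau basis: for each $k$, the vector $e_kv$ decomposes into its $\chi_{L+\delta^{kj}}$-eigencomponents and $f_kv$ into its $\chi_{L-\delta^{kj}}$-eigencomponents, and one sets $\varphi(T(L\pm\delta^{kj}))$ to be the rescaling dictated by the Gelfand-Tsetlin formulas~(\ref{Gelfand-Tsetlin formulas}). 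Genericity of $T(L)$ guarantees the denominators $[l_{k,i}-l_{k,j}]_q$ are nonzero, so the rescaling is well defined; iterate the construction along $\mathcal{B}(T(L))$. That $\varphi$ intertwines the $U_q$-action reduces to the corresponding check on $V(T(L))$ itself, already granted by Theorem~\ref{Theorem: GT theorem q-generic}. Since $N$ is irreducible, $\varphi$ is surjective, so $N\cong V(T(L))/K$ for some proper submodule $K$ avoiding $T(L)$. Sums of such submodules still avoid $T(L)$ (no element with nonzero $T(L)$-coefficient in the tableau basis can appear in such a sum, since each summand contributes zero along the eigenline $\mathbb{C}\,T(L)$), so there is a unique largest such $K$, which determines $N$ up to isomorphism.

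The main obstacle is the construction of $\varphi$. Producing, for every $T(R)\in\mathcal{B}(T(L))$, a well-defined vector in $N(\chi_R)$ by iterated eigencomponent extraction from $v$ requires showing that the assignment is independent of the chosen path from $T(L)$ to $T(R)$ through $\mathcal{B}(T(L))$, and that the resulting linear map respects both the commutation and quantum Serre relations. Both points rely critically on the separation property (Proposition~\ref{Proposition: separation of tableaux}) and on the nonvanishing of the denominators in~(\ref{Gelfand-Tsetlin formulas}) under genericity; once granted, path-independence and relation-compatibility transfer directly from the corresponding identities that already hold on $V(T(L))$.
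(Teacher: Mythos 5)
First, a point of comparison: the paper gives no proof of this statement at all --- it is imported verbatim from \cite{MT}, Proposition 2 (``We recall the following result of Mazorchuk and Turowska''), so there is no internal argument to measure yours against. Judged on its own, your existence argument is fine, but the uniqueness half has two genuine gaps. The smaller one: the map $\varphi\colon V(T(L))\to N$ you want cannot exist in general, because $T(L)$ need not generate $V(T(L))$. By Theorem \ref{basis of submodule generated by T}, $U_q\cdot T(L)=W(T(L))$, which is a proper subspace whenever $\Omega^{+}(T(L))\neq\emptyset$: genericity only controls differences within a row, so the numerators $\prod_i[l_{k+1,i}-l_{k,j}]_q$ in (\ref{Gelfand-Tsetlin formulas}) can vanish, and your eigencomponent extraction can never reach tableaux outside $W(T(L))$. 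This part is repairable --- run the argument with $W(T(L))$ in place of $V(T(L))$; the ``unique maximal submodule avoiding $T(L)$'' step survives there because, by Proposition \ref{Proposition: separation of tableaux}, every submodule is spanned by the tableaux it contains.

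The more serious gap is that the entire content of the theorem is concentrated in exactly the step you defer. To extract ``the $\chi_{L+\delta^{kj}}$-eigencomponents of $e_kv$'' you must first know that $e_kv$ lies in $\bigoplus_j N(\chi_{L+\delta^{kj}})$ for an \emph{abstract} irreducible $N$; this is the Harish-Chandra property of $\Gamma_q$ in $U_q$ with precisely these shifts, and it is established neither in the paper nor in your sketch. Likewise, path-independence and relation-compatibility cannot ``transfer directly from the identities that hold on $V(T(L))$'': $N$ is not given in tableau form, so identities valid on $V(T(L))$ say nothing about $N$ until the isomorphism you are trying to construct is already in hand --- the reasoning is circular as stated. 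What is actually required is that the $\Gamma_q$-eigenvalue data alone forces the matrix coefficients of $e_k$ and $f_k$ on any module with generic support to agree with (\ref{Gelfand-Tsetlin formulas}) up to a diagonal rescaling of the basis; proving this (via the commutation relations between $e_k$, $f_k$ and the central generators $c_{mk}$ of Theorem \ref{generators of the quantum center}, or via the Drozd--Futorny--Ovsienko completion machinery of \cite{DFO3}) is precisely the substance of \cite{MT}, Proposition 2, and it is the one thing your proposal assumes rather than proves.
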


\begin{definition} \label{def-cont}
If $T(R)$ is a $q$-generic tableau and $\sr\in\Sp\Gamma_q$ corresponds to $R$ then, the unique module $N$ such that $N(\sr) \neq 0$ is called the \emph{irreducible Gelfand-Tsetlin module containing $T(R)$}, or simply,  the \emph{irreducible module containing $T(R)$}.
\end{definition}

This section is devoted to  an explicit construction of the irreducible Gelfand-Tsetlin module containing $T(R)$ for every $q$-generic tableau $T(R)$.

For convenience we introduce and recall some notation.

\begin{notation} \label{notation}Let $T(L)$ be a fixed tableau of height $n$.
\begin{itemize}
\item[(i)] $\mathcal{B}(T(L)):=\{T(L+z): z\in \mathbb{Z}^{\frac{n(n-1)}{2}}\}$.
\item[(ii)] $V(T(L)) = \Span \mathcal{B}(T(L))$.
\item[(iii)] For any $T(R)\in \mathcal{B}(T(L))$ and for any $1<p\leq n$, $1\leq s\leq p$ and $1\leq u\leq p-1$ we define:
\begin{itemize}
\item[(a)] $\omega_{p,s,u}(T(R)):=r_{p,s}-r_{p-1,u}$.
\item[(b)] $\Omega(T(R)):=\{(p,s,u): \omega_{p,s,u}(T(R))\in \frac{1(q)}{2}+\mathbb{Z}\}$
\item[(c)] $\Omega^{+}(T(R)):=\{(p,s,u): \omega_{p,s,u}(T(R))\in \frac{1(q)}{2}+\mathbb{Z}_{\geq 0}\}$
\item[(d)] $\mathcal{N}(T(R)):=\{T(S)\in \mathcal{B}(T(L)): \Omega^{+}(T(R))\subseteq \Omega^{+}(T(S))\}$
\item[(e)] $ W(T(R)):=\Span \mathcal{N}(T(R))$
\item[(f)] $U_q\cdot T(R)$: the $U_q$-submodule of $V(T(L))$ generated by $T(R)$
\end{itemize}
\end{itemize}
\end{notation}

\subsection{Submodule generated by a single tableau}
In order to find an explicit basis of every irreducible generic module, we first find a  basis of  $U_q\cdot T(R)$ for any tableau $T(R)$ in $\mathcal{B}(T(L))$.

\begin{definition}
Given $T(Q)$ and $T(R)$ in $\mathcal{B}(T(L))$, we write $T(R)\preceq_{(1)}T(Q)$ if there exist $g\in\mathfrak{gl}(n)$ such that $T(Q)$ appears with nonzero coefficient in the decomposition of $g\cdot T(R)$ into a linear combination of tableaux. For any $p\geq 1$ we write  $T(R)\preceq_{(p)} T(Q)$ if there exist tableaux $T(L^{(1)})$,..., $T(L^{(p)})$, such that $$T(R)=T(L^{(0)})\preceq_{(1)}T(L^{(1)})\preceq_{(1)}\ldots\preceq_{(1)}T(L^{(p)})=T(Q).$$
\end{definition}

As an immediate consequence of the definition of $\preceq_{(p)}$ we have the following.
\begin{lemma}\label{properties of relation less or equal}
If $T(Q)$, $T(Q^{(0)})$, $T(Q^{(1)})$ and $T(Q^{(2)})$ are tableaux in $\mathcal{B}(T(L))$ then:

\begin{itemize}
\item[(i)] $T(Q^{(0)})\preceq_{(p)} T(Q^{(1)})$ and $T(Q^{(1)})\preceq_{(q)} T(Q^{(2)})$ imply \\ $T(Q^{(0)})\preceq_{(p+q)} T(Q^{(2)})$.
\item[(ii)] $T(Q)\preceq_{(1)} T(Q)$.
\end{itemize}
\end{lemma}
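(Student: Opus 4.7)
The lemma is truly routine: both parts amount to unravelling the definition of $\preceq_{(p)}$, so the plan is to spell out the witnessing chains and point out which generator makes part (ii) work.

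For part (i), the plan is to take the chain certifying $T(Q^{(0)})\preceq_{(p)} T(Q^{(1)})$, say
\[
T(Q^{(0)}) = T(L^{(0)}) \preceq_{(1)} T(L^{(1)}) \preceq_{(1)} \cdots \preceq_{(1)} T(L^{(p)}) = T(Q^{(1)}),
\]
and the chain certifying $T(Q^{(1)})\preceq_{(q)} T(Q^{(2)})$, say
\[
T(Q^{(1)}) = T(M^{(0)}) \preceq_{(1)} T(M^{(1)}) \preceq_{(1)} \cdots \preceq_{(1)} T(M^{(q)}) = T(Q^{(2)}),
\]
and splice them at the common endpoint $T(Q^{(1)}) = T(L^{(p)}) = T(M^{(0)})$. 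The concatenated sequence of length $p+q$ is a valid $\preceq_{(1)}$-chain from $T(Q^{(0)})$ to $T(Q^{(2)})$, witnessing $T(Q^{(0)})\preceq_{(p+q)} T(Q^{(2)})$.

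For part (ii), I need to exhibit a single element of $U_q$ whose action on $T(Q)$ has a nonzero coefficient on $T(Q)$ itself. The natural choice is one of the Cartan generators $q^{\epsilon_k}$, because the Gelfand-Tsetlin formulae (\ref{Gelfand-Tsetlin formulas}) give
\[
q^{\epsilon_k}(T(Q)) = q^{a_k}\, T(Q)
\]
with $a_k \in \mathbb{C}$, and $q^{a_k}\neq 0$. Hence $T(Q)$ appears with nonzero coefficient in $q^{\epsilon_k}\cdot T(Q)$, giving $T(Q)\preceq_{(1)} T(Q)$.

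There is no real obstacle here; the only thing to be careful about is that the definition of $\preceq_{(1)}$ as stated in the excerpt refers to $g\in\mathfrak{gl}(n)$, which in the quantum setting must be read as $g\in U_q$ (the quantum analogue of the enveloping algebra), and the argument above uses precisely such a generator. Once this is noted, both parts are immediate from the definition.
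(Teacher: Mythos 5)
The paper gives no proof, stating the lemma as an immediate consequence of the definition, and your spelled-out argument (concatenating chains for (i), using the diagonal action of $q^{\epsilon_k}$ with nonzero eigenvalue $q^{a_k}$ for (ii)) is exactly the intended verification. Your remark that the $g\in\mathfrak{gl}(n)$ in the definition should be read as an element of $U_q$ is also the right reading of what is evidently a typo carried over from the classical case.
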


The next theorem discribes the submodule of $V(T(L))$ generated by a fixed tableau $T(R)$.

\begin{theorem}\label{basis of submodule generated by T}
Let $T(L)$ be $q$-generic tableau, $T(R)$ and $T(S)$ be in $\mathcal{B}(T(L))$.
\begin{itemize}
\item[(i)]
The Gelfand-Tsetlin formulas endow $ W(T(R))$ with a $U_q$-module structure.

\item[(ii)]
$U_q\cdot T(R)= W(T(R))$. In particular, $\mathcal{N}(T(R))$ forms a basis of   $U_q\cdot T(R)$, and the action of $U_q$ on  $U_q\cdot T(R)$ is given by the Gelfand-Tsetlin formulas.
\item[(iii)]$U_q\cdot T(R)=  U_q\cdot T(S)$ if and only if $\Omega^{+}(T(S))=\Omega^{+}(T(R))$.
\item[(iv)]$U_q \cdot T(R)=V(T(L))$ whenever  $\Omega^{+}(T(R))=\emptyset$.
\item[(v)]Every submodule of $V(T(L))$ is finitely generated.
\end{itemize}
\end{theorem}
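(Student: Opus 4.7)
The plan is to dispatch (i), (iv), and (v) by direct arguments, derive (iii) from (ii), and carry the substantive work in (ii) through a reachability induction. For (i), I would verify that $W(T(R))$ is stable under each generator. The Cartan generators $q^{h}$ act diagonally, so nothing is required. For $e_k$ acting on $T(S)\in\mathcal{N}(T(R))$, one has
\[
e_k T(S) \;=\; -\sum_{j=1}^{k}\frac{\prod_i [s_{k+1,i}-s_{k,j}]_q}{\prod_{i\ne j}[s_{k,i}-s_{k,j}]_q}\, T(S+\delta^{kj}),
\]
where the denominator is nonzero by the $q$-generic hypothesis. For each summand with nonzero coefficient I would check that every $(p,a,b)\in\Omega^{+}(T(R))\subseteq\Omega^{+}(T(S))$ still lies in $\Omega^{+}(T(S+\delta^{kj}))$. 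The only cases where $\omega_{p,a,b}$ changes are $(k,j,u)$ (where it increases by $1$, hence stays in the positive cone) and $(k+1,a,j)$ (where it decreases by $1$; if its integer part in $T(S)$ were $0$ then $[s_{k+1,a}-s_{k,j}]_q=0$ kills the summand, otherwise it remains in $\frac{1(q)}{2}+\mathbb{Z}_{\geq 0}$). The argument for $f_k$ is entirely symmetric with rows $k-1$ and $k+1$ interchanged.

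The inclusion $U_q\cdot T(R)\subseteq W(T(R))$ in (ii) then follows because $W(T(R))$ is by (i) a $U_q$-submodule containing $T(R)$. For the reverse inclusion I would use Proposition~\ref{Proposition: separation of tableaux}: distinct tableaux lie in distinct generalised $\Gamma_q$-eigenspaces, so projecting $u\cdot T(R)$ onto an eigenspace isolates an individual tableau while remaining inside $U_q\cdot T(R)$. It therefore suffices to produce, for each $T(S)\in\mathcal{N}(T(R))$, a chain
\[
T(R)=T(R^{(0)})\preceq_{(1)}T(R^{(1)})\preceq_{(1)}\cdots\preceq_{(1)}T(R^{(m)})=T(S),
\]
with intermediate tableaux automatically lying in $\mathcal{N}(T(R))$ by (i). My plan is to induct on $d(R',S)=\sum_{i,j}|r'_{ij}-s_{ij}|$, processing rows top-down (row $n$ is frozen): at each step pick $(k,j)$ with $r'_{kj}\ne s_{kj}$ and apply $e_k$ or $f_k$ according to whether $r'_{kj}<s_{kj}$ or $r'_{kj}>s_{kj}$. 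The main obstacle is guaranteeing a \emph{legal} move, i.e.\ nonvanishing coefficient for $T(R'\pm\delta^{kj})$, in spite of possible numerator vanishings $[r'_{k\pm 1,i}-r'_{kj}]_q=0$; my plan is to show that such a wall in the direction of motion would force some triple into $\Omega^{+}(T(R'))$ whose image in $T(S)$ lies outside $\Omega^{+}(T(S))$, contradicting the inclusion $\Omega^{+}(T(R))\subseteq\Omega^{+}(T(S))$ after using that the frozen higher rows of $T(R')$ already agree with those of $T(S)$.

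Parts (iii), (iv), (v) are short consequences. For (iii), by (ii) the equality $U_q\cdot T(R)=U_q\cdot T(S)$ is equivalent to $\mathcal{N}(T(R))=\mathcal{N}(T(S))$, which by the definition of $\mathcal{N}$ amounts to $\Omega^{+}(T(R))\subseteq\Omega^{+}(T(S))$ and vice versa. For (iv), if $\Omega^{+}(T(R))=\emptyset$ then the condition defining $\mathcal{N}(T(R))$ is vacuous, so $\mathcal{N}(T(R))=\mathcal{B}(T(L))$ and (ii) yields $U_q\cdot T(R)=V(T(L))$. Finally (v) follows from Theorem~\ref{Theorem: GT theorem q-generic}, which states that $V(T(L))$ has finite length: any submodule is a submodule of a finite-length module, hence itself of finite length and therefore finitely generated.
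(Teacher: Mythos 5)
Your overall architecture is the same as the paper's: (i) by tracking which triples $\omega_{p,s,u}$ can change under a shift $\pm\delta^{kj}$ and observing that the one dangerous case forces the Gelfand--Tsetlin coefficient to vanish; (ii) by combining the separation property of $\Gamma_q$ (Proposition~\ref{Proposition: separation of tableaux}) with a reachability induction on $\sum|r'_{ij}-s_{ij}|$; and (iii)--(v) as formal consequences. Parts (i), (iii), (iv), (v) of your write-up are correct.

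The gap is in the reachability step of (ii), which is exactly where the real content of the theorem sits. Your rule of processing rows top-down and moving one entry at a time can get stuck, and your proposed rescue --- that a vanishing numerator would force a triple into $\Omega^{+}$ whose image lies outside $\Omega^{+}(T(S))$, contradicting $\Omega^{+}(T(R))\subseteq\Omega^{+}(T(S))$ --- does not close, because the blocking triple is only guaranteed to lie in $\Omega^{+}$ of the \emph{intermediate} tableau $T(R')$, not of $T(R)$. Concretely, take $n=3$ with $l_{21}-l_{11}\in\frac{1(q)}{2}+\mathbb{Z}$ and all other differences in distinct cosets, and set (integer parts within the common coset) $r_{21}=a$, $r_{11}=a+5$, $s_{21}=a+7$, $s_{11}=a+10$, with the remaining entries of $R$ and $S$ equal. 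Then $\omega_{2,1,1}$ equals $-5$ at $T(R)$ and $-3$ at $T(S)$, so $\Omega^{+}(T(R))=\Omega^{+}(T(S))=\emptyset$ and $T(S)\in\mathcal{N}(T(R))$. Top-down processing first moves $r_{21}$ to $a+7$ (no walls, since row $3$ lies in other cosets), but then raising $r'_{11}$ past the value $a+7=s_{21}$ requires the coefficient $[\,r'_{21}-r'_{11}]_q=[0]_q=0$: the walk is blocked, and the triple $(2,1,1)$, which now lies in $\Omega^{+}(T(R'))$, gives no contradiction since it was never in $\Omega^{+}(T(R))$. (Here one must move $l_{11}$ \emph{before} $l_{21}$; in other configurations, e.g.\ $\omega_{2,1,1}(T(R))=0$ with both entries increasing, the opposite order is forced.) So no fixed row order works: the admissible order depends on the relative values of the entries within each coset class, which is what the paper's inductive step handles by locating maximal same-signed ``chains'' $\sum_{s}\delta^{i+s,j_s}$ and moving them so that every intermediate tableau $T(S')$ satisfies $\Omega^{+}(T(R))\subseteq\Omega^{+}(T(S'))\subseteq\Omega^{+}(T(S))$. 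You need to supply such a configuration-dependent order (or the chain decomposition) for (ii) to be complete.
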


\begin{proof}
(i) In order to prove that $W(T(R))$ is a submodule, it is enough to prove $U\cdot T(S)\subseteq  W(T(R))$ for any $T(S)\in\mathcal{N}(T(R))$. We will show $g\cdot T(S)$ is in $W(T(R))$ for every (standard) generator of  $U_{q}$.

Suppose $g=e_{k}$ for some $1\leq k\leq n-1$. By the Gelfand-Tsetlin formulas, we have
$$e_{k}(T(S))=-\sum_{j=1}^{k}\frac{\prod_{i} [s_{k+ 1,i}-s_{k,j}]_q}{\prod_{i\neq j} [s_{k,i}-s_{k,j}]_q}T(S+\delta^{kj})$$

If $e_{k}(T(S))\notin  W(T(R))$, then there exist $k$ and  $j$ such that $T(S)\in \mathcal{N}(T(R))$ but $T(S+\delta^{kj})$ $\notin\mathcal{N}(T(R))$. That implies
\begin{align*}\Omega^{+}(T(R))\subseteq \Omega^{+}(T(S)),
\text{ and }
\Omega^{+}(T(R))\nsubseteq\Omega^{+}(T(S+\delta^{kj})),
\end{align*}
\noindent
Hence, there exists $(p,s,u)\in\Omega^{+}(T(R))$ such that $\omega_{p,s,u}(T(S))\in\frac{1(q)}{2}+\mathbb{Z}_{\geq 0}$ and\\ $\omega_{p,s,u}(T(S+\delta^{kj}))$ $\notin \frac{1(q)}{2} +\mathbb{Z}_{\geq 0}$. The latter holds only  in two cases:
$$
(p,s,u)\in \{(k,j,u), (k+1,s,j): 1\leq u\leq k-1; 1\leq s\leq k+1\}.
$$
Note that if neither of these two cases hold, we have $\omega_{p,s,u}(T(R+\delta^{kj}))=\omega_{p,s,u}(T(S))$. We consider now each of the two cases separately.

\begin{enumerate}[(i)]
\item[(a)] Suppose $(p,s,u)=(k,j,u)$. Then $\omega_{k,j,u}(T(S))=s_{kj}-s_{k-1,u}\in\frac{1(q)}{2}+\mathbb{Z}_{\geq 0}$ and $\omega_{k,j,u}(T(S+\delta^{kj}))=(s_{kj}+1)-s_{k-1,u}\notin \frac{1(q)}{2}+\mathbb{Z}_{\geq 0}$, which is impossible.
\item[(b)] Suppose $(p,s,u)=(k+1,s,j)$. Then $\omega_{k+1,s,j}(T(S))=s_{k+1,s}-s_{ki}\in \frac{1(q)}{2}+\mathbb{Z}_{\geq 0}$ and  $\omega_{k+1,s,i}(T(S+\delta^{ki}))=s_{k+1,s}-(s_{ki}+1)\notin\frac{1(q)}{2}+\mathbb{Z}_{\geq 0}$. Hence $s_{k+1,s}-s_{k,i}=0$ and then the coefficient of $T(S+\delta^{ki})$ in the decomposition of $e_{k}(T(S))$ is
    $$-\frac{\prod_{i} [s_{k+ 1,i}-s_{k,j}]_q}{\prod_{i\neq j} [s_{k,i}-s_{k,j}]_q}=0.$$
\end{enumerate}
Therefore, the tableaux that appear with nonzero coefficients in the decomposition of $e_{k}(T(S))$
are elements of $N(T(R))$. Hence,  $e_{k}(T(S))\in  W(T(R))$.

The proof that  $f_{k}(T(S))\in  W(T(R))$ is analogous to the one of  $e_{k}(T(S))$ $\in W(T(R))$. The case
$ q^{\epsilon_{k}}$ is trivial because $ q^{\epsilon_{k}}$ acts as a  multiplication by a scalar on $T(S)$ and $T(S)\in\mathcal{N}(T(R))\subseteq  W(T(R))$.

(ii) The Gelfand-Tsetlin subalgebra separate tableaux in $\mathcal{B}(T(L))$, it is sufficient to prove that for any $T(S)\in W(T(R))$, $T(S)\preceq_{(p)} T(R)$  for some $p\in \mathbb{Z}_{> 0}$. Let $T(S)=T(R+z)$, we prove the statement by induction on $\sum_{1\leq j\leq i< n} |z_{ij}|$. when $\sum_{1\leq j\leq i< n} |z_{ij}|=1$, there exist $z_{ij}=\pm 1$, all other entries are zero. We consider each case separately.

\begin{enumerate}[(i)]
\item [(a)] Suppose $z_{ij}=1$. Then the coefficient of $T(S)$ in $e_i T(R)$ is

$$-\frac{\prod_{i} [r_{i+ 1,k}-r_{i,j}]_q}{\prod_{i\neq j} [r_{j,k}-r_{i,j}]_q}.$$
If there exist $[r_{i+1,k}-r_{i,j}]_q=0$, one has $s_{i+1,k}-s_{i,j}=\frac{1(q)}{2}-1$, then $T(S)\notin W(T(R))$.
Thus $r_{i+1,k}-r_{i,j}\neq 0$ for any $k$ which implies $T(S)\preceq_{(1)}T(R)$
\item [(b)] Suppose $z_{ij}=-1$. Similarly the coefficient of $T(S)$ in $f_i T(R)$ is not zero.
\end{enumerate}

When $\sum_{1\leq j\leq i< n} |z_{ij}|>1$, It is sufficient to proof the following statement. Let $z_{i,j_0},z_{i+1,j_1},\cdots,z_{i_k, j_k}$ be the nonzero elements such that $r_{i+t,j_t}-r_{i+t',j_{t'}}\in\frac{1(q)}{2}+\mathbb{Z}$ for any $1\leq t, t'\leq k$, then there exist $T(S')=T(R+z')$ such that $\Omega^{+}T(S)\subseteq\Omega^{+}T(S')\subseteq\Omega^{+}T(R)$ and $|z'_{ij}|\leq|z_{ij}|$ . Let $t$ be the maximal number such that all the numbers $z_{i,j_0},\ldots,z_{i+t,j_t}$  have the same sign, then $\Omega^{+}(T(R))\subseteq\Omega^{+}(T(S- \sum_{s=0}^{t}\delta^{i+s,j_s}))\subseteq\Omega^{+}(T(S))$ if the sign is positive. $\Omega^{+}(T(R))\subseteq\Omega^{+}(T(S+\sum_{s=0}^{t}\delta^{i+s,j_s}))\subseteq\Omega^{+}(T(S))$ if the sign is negative. By induction one has that $T(S)\preceq_{(p)} T(R)$.

(iii) (iv) and (v) are easy to see from (i) and (ii).
\end{proof}

\section{Main results}
\begin{definition}
For any $q$-generic tableau $T(L)$, the {\em block associated with $T(L)$} is the set of all Gelfand-Tsetlin $U_q$-modules with Gelfand-Tsetlin support contained in $Supp_{GT}(V(T(L)))$.\\
Also, for any $T(R)\in\mathcal{B}(T(L))$, $1<p\leq n$ and $1\leq u\leq p-1$, define $d_{pu}(T(R))$ to be the number of distinct elements in $
\{v_{p,s,u}(T(R))\ |\ (p,s,u)\in\Omega(T(R))\}.
$, where $\omega_{p,s,u}(T(R))=u_{p,s,u}(T(R))+v_{p,s,u}(T(R)),$ with $u_{p,s,u}(T(R))\in \frac{1(q)}{2}$ and $v_{p,s,u}(T(R))\in\mathbb{Z}$.
\end{definition}

Now we are ready to give the main theorem in the paper.

\begin{theorem}\label{the-main}
Let $T(L)$ be $q$-generic tableau, $T(R)\in \mathcal{B}(T(L))$.
\begin{itemize}
\item[(i)]
The irreducible module  containing $T(R)$ has a basis of tableaux
$$\mathcal{I}(T(R))=\{T(S)\in\mathcal{B}(T(R)): \Omega^{+}(T(S))=\Omega^{+}(T(R))\}.$$
The action of $U_q$ on this irreducible module is given by the Gelfand-Tsetlin formulas (\ref{Gelfand-Tsetlin formulas}).
\item[(ii)]
The number of irreducible modules in the block associated with $T(L)$ is:
$$\prod_{1\leq u\leq p-1< n}(d_{pu}(T(L))+1).$$
In particular, $V(T(L))$ is irreducible if and only if $d_{pu}(T(L))=0$ for any $p$ and $u$, or equivalently, if and only if $\Omega(T(L))=\emptyset$.
\end{itemize}
\end{theorem}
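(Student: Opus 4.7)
The plan for part~(i) is to realize the irreducible module containing $T(R)$ as an explicit quotient of the submodule $U_q\cdot T(R)$ described in Theorem~\ref{basis of submodule generated by T}. I would introduce
\[
M(T(R)) := \sum_{\substack{T(S)\in\mathcal{N}(T(R))\\ \Omega^{+}(T(S))\supsetneq \Omega^{+}(T(R))}} U_q\cdot T(S),
\]
a $U_q$-submodule of $U_q\cdot T(R)$. Using Theorem~\ref{basis of submodule generated by T}(ii), each summand has basis $\mathcal{N}(T(S))$; since $T(Q)\in\mathcal{N}(T(S))$ forces $\Omega^{+}(T(Q))\supseteq\Omega^{+}(T(S))\supsetneq\Omega^{+}(T(R))$, and conversely every $T(Q)\in\mathcal{N}(T(R))\setminus\mathcal{I}(T(R))$ is itself a valid summand index, one concludes that $M(T(R))$ has basis $\mathcal{N}(T(R))\setminus\mathcal{I}(T(R))$. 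Consequently $U_q\cdot T(R)/M(T(R))$ inherits a $U_q$-module structure with basis the images of $\mathcal{I}(T(R))$ and action given by the Gelfand-Tsetlin formulas~(\ref{Gelfand-Tsetlin formulas}), where coefficients of tableaux lying in $M(T(R))$ are simply discarded.

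To deduce irreducibility of this quotient, I would argue that any nonzero submodule, being itself a Gelfand-Tsetlin module, must by Proposition~\ref{Proposition: separation of tableaux} contain the image of some $T(S)\in\mathcal{I}(T(R))$. Since $\Omega^{+}(T(S))=\Omega^{+}(T(R))$, Theorem~\ref{basis of submodule generated by T}(iii) yields $U_q\cdot T(S)=U_q\cdot T(R)$, so the submodule fills the whole quotient. The identification with the unique irreducible containing $T(R)$ in the sense of Definition~\ref{def-cont} is then immediate from Theorem~\ref{unique GT}.

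For part~(ii), every irreducible $N$ in the block has nonempty Gelfand-Tsetlin support meeting $\mathcal{B}(T(L))$, so by part~(i) it coincides with the irreducible containing some $T(R)\in\mathcal{B}(T(L))$ and is classified up to isomorphism by $\Omega^{+}(T(R))\subseteq\Omega(T(L))$. I would count the realizable $\Omega^+$-sets one pair $(p,u)$ at a time: the restriction of $\Omega^{+}(T(R))$ to triples $(p,\cdot,u)$ equals the upper set $\{s:v_{p,s,u}(T(R))\geq 0\}$ in the linear order on the values $v_{p,s,u}(T(R))$, and shifting $r_{p-1,u}$ by an integer translates these values uniformly, so the threshold can cross any of the $d_{pu}(T(L))$ distinct values, yielding exactly $d_{pu}(T(L))+1$ distinct upper sets. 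A top-down row-by-row inductive construction of $T(R)$ (choosing $r_{n-1,*}$ first, then $r_{n-2,*}$, and so on, compensating in the $r_{p,s}$'s as needed) shows that the choices at different pairs $(p,u)$ can be realized independently, giving the product $\prod_{1\leq u\leq p-1<n}(d_{pu}(T(L))+1)$. The ``in particular'' statement follows at once, since $V(T(L))$ is irreducible iff the block contains a single isoclass iff every $d_{pu}(T(L))=0$ iff $\Omega(T(L))=\emptyset$.

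The hardest step will be the independence of the $(p,u)$-choices in part~(ii): each entry $r_{p-1,u}$ simultaneously controls the relations at $(p,s,u)$ and at $(p-1,u,u')$, so prescribing an upper set at every $(p,u)$ requires a careful inductive construction rather than an obvious independent choice of coordinates. The $q$-genericity hypothesis (ensuring at most one $s$ contributes per $(p,u)$) is precisely what makes each step of this induction well-posed.
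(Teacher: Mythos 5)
Your proposal is correct and follows essentially the same route as the paper: the irreducible containing $T(R)$ is realized as the quotient of $U_q\cdot T(R)$ by the sum of the submodules $U_q\cdot T(S)$ with $\Omega^{+}(T(S))\supsetneq\Omega^{+}(T(R))$, irreducibility comes from separation of tableaux plus Theorem~\ref{basis of submodule generated by T}(iii), and the count in (ii) uses the same decomposition $\Omega=\bigsqcup_{p,u}\Omega_{p,u}$ with $s_{p,u}=d_{pu}(T(L))+1$. You are in fact somewhat more careful than the paper on the one point it leaves implicit, namely that the choices of $\Omega^{+}_{p,u}$ at distinct pairs $(p,u)$ can be realized independently (your top-down row-by-row construction, made well-posed by $q$-genericity forcing $|\Omega_{p,u}|\leq 1$).
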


\begin{proof}
\begin{itemize}
\item[(i)] For each tableau $T(R)$, we have an explicit construction of the  module containing $T(R)$ (recall Definition \ref{def-cont}):
$$M(T(R)):=U\cdot T(R)/\left(\sum U\cdot T(S)\right)$$
where the sum is taken over tableaux $T(Q)$ such that $\Omega^{+}(T(R))\subsetneq\Omega^{+}(T(S))$ and $U\cdot T(S)$ is a proper submodule of $U\cdot T(R)$.The module $M(T(R))$ is simple. Indeed, this follows from the fact that for any nonzero tableau $T(S)$ in $M(T(R))$ we have $U\cdot T(S)=U\cdot T(R)$ and, hence,  $T(S)$ generates $M(T(R))$.
By Theorem \ref{basis of submodule generated by T}, $\mathcal{I}(T(R))$ is a basis for $M(T(R))$.
\item[(ii)]
The irreducible modules are in one-to-one correspondence with the subsets of $\Omega(T(L))$ of the form $\Omega^{+}(T(L+z))$. For any $T(R)\in\mathcal{B}(T(L))$, we can decompose $\Omega(T(R))$ into a disjoint union
$\Omega(T(R))=\bigsqcup_{p,u}\Omega_{pu}(T(R))$, where $$\Omega_{p,u}(T(R))=\{(p,1,u),(p,2,u),\ldots,(p,p,u)\}\cap \Omega(T(R)).$$ Now, if $\Omega^{+}_{p,u}(T(R)):=\Omega_{p,u}(T(R))\cap\Omega^{+}(T(R))$, one can  write $\Omega^{+}(T(R))=\bigsqcup_{p,u}\Omega^{+}_{pu}(T(R))$. For $p, u$ fixed, let us denote by $s_{p,u}$ the number of different subsets of the form $\Omega^{+}_{p,u}(T(R))$. So, the number of different subsets of the form $\Omega^{+}(T(R))$ is $\prod_{p,u}s_{p,u}$. It is easy to see that $s_{pu}=d_{pu}(T(L))+1$.
\end{itemize}
\end{proof}

\section{Root of unity case}\label{section-root}
This section is devoted to describing the irreducible module of the quantum enveloping algebra $U_q$ when the complex parameter $q$ is a root of unity. In this case denote by $d$ its order. Since $q \neq \pm 1$. We must have $d>2$.

\begin{theorem}\cite{KS}
When $q$ is a root of unity, any irreducible module of $U_q$ is finite dimensional.
\end{theorem}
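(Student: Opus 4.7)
The plan is to invoke the standard ``large center'' phenomenon for quantum enveloping algebras at roots of unity. When $q$ is a root of unity of order $d>2$, set $d'=d$ if $d$ is odd and $d'=d/2$ if $d$ is even. The first step is to verify that the elements
\[
e_i^{d'},\qquad f_i^{d'},\qquad q^{\pm d' \epsilon_i}\quad (1\leqs i\leqs n)
\]
all lie in the center of $U_q$. This is a direct computation using the defining relations: the $q$-binomial coefficients $\binom{d'}{k}_q$ vanish for $0<k<d'$ at a root of unity, which forces $[e_i^{d'},f_j]=0$ and $[e_i^{d'},q^h]=q^{\langle h,d'\alpha_i\rangle}e_i^{d'}=e_i^{d'}$ (and similarly for $f_i^{d'}$), while centrality of $q^{\pm d'\epsilon_i}$ is immediate from the $q^h$-relations.

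Next I would denote by $Z_0$ the subalgebra generated by these central elements together with the center $Z(U_q)$. A classical PBW-type argument (a $q$-analogue of the divided-power basis) shows that $U_q$ is a \emph{finitely generated} module over $Z_0$; in fact free of rank $d^{\,\dim\mathfrak{n}_+ + \dim\mathfrak{n}_- + n}$ after restricting to the smaller central subalgebra generated only by the $e_i^{d'}, f_i^{d'}, q^{\pm d'\epsilon_i}$. This is the key structural input and is where the bulk of the work lies; it is exactly the content of the Klimyk--Schm\"udgen reference [KS].

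Given this, the conclusion is short. Let $V$ be an irreducible $U_q$-module. Since $U_q$ is countably generated over $\mathbb{C}$, the version of Schur's lemma for countable-dimensional algebras applies, so every central element acts on $V$ as a scalar. In particular $Z_0$ acts by a character $\chi:Z_0\to\mathbb C$. Thus $V$ is a module over the quotient $U_q/(\ker\chi)U_q$, which is a finite-dimensional algebra by the previous paragraph (a finitely generated module over a field is finite-dimensional). Since any irreducible module over a finite-dimensional algebra is itself finite-dimensional, $\dim_{\mathbb C} V<\infty$.

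The principal obstacle is the first structural step: establishing that the listed elements are central and that $U_q$ is finite as a module over the subalgebra they generate. The centrality is a clean computation once the correct value of $d'$ is isolated (the case distinction between $d$ odd and $d$ even, forced by the $q-q^{-1}$ in the denominators of the commutator relations, is the only subtle point), and the finite-rank statement follows from the standard quantum PBW theorem applied to monomials of bounded degree in each $e_i$ and $f_i$. Everything after that is formal.
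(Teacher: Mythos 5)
The paper offers no proof of this statement at all --- it is imported verbatim from [KS] --- so there is nothing internal to compare against; your proposal has to stand on its own. Its skeleton is indeed the standard one (exhibit a large central subalgebra $Z_0$ over which $U_q$ is module-finite, use the Dixmier--Schur lemma for countable-dimensional algebras over the uncountable field $\mathbb{C}$ to make $Z_0$ act by a character $\chi$ on an irreducible $V$, and conclude that $V$ is an irreducible module over the finite-dimensional algebra $U_q\otimes_{Z_0}(Z_0/\ker\chi)$, hence finite-dimensional). Everything from ``Given this, the conclusion is short'' onward is correct.

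The gap is in the structural step, in two places. First, for $n\geqs 3$ the subalgebra generated by $e_i^{d'}$, $f_i^{d'}$, $q^{\pm d'\epsilon_i}$ for the \emph{simple} root vectors only is not large enough: $U_q$ is \emph{not} a finitely generated module over it, and your claimed freeness of rank $d^{\dim\mathfrak{n}_++\dim\mathfrak{n}_-+n}$ over it fails. Already for $\mathfrak{gl}_3$ the PBW basis of $U_q^+$ involves the composite root vector $E_{12}$; the elements $E_1^{d'},E_2^{d'}$ generate a two-variable polynomial subring of the three-variable polynomial ring $\mathbb{C}[E_1^{d'},E_{12}^{d'},E_2^{d'}]$ over which $U_q^+$ is free of rank $(d')^3$, and a polynomial ring in three variables is not module-finite over one in two, so no power $E_{12}^{kd'}$ is captured. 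The correct statement (De Concini--Kac, and what [KS] actually uses) requires the $\ell$-th powers $E_\beta^{\ell}$, $F_\beta^{\ell}$ of the quantum root vectors for \emph{all} positive roots $\beta$, constructed via Lusztig's braid group action, and the centrality of those powers is a further nontrivial computation, not just the $q$-binomial identity for the simple generators. (Throwing the full center $Z(U_q)$ into $Z_0$, as you do, would rescue module-finiteness only by invoking the theorem that $U_q$ is finite over its center --- which is precisely the content you are trying to supply.) Second, a parity slip: when $d$ is even, $d'=d/2$ and $q^{d'}=-1$, so $q^{h}e_i^{d'}q^{-h}=(-1)^{\langle h,\alpha_i\rangle}e_i^{d'}$ and $e_i^{d'}$ is only skew-central; one must pass to $d$-th powers (or to the even weight lattice) to land in the center. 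Both defects are repairable, but as written the one step you yourself identify as carrying the whole weight does not hold.
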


Denote

\begin{displaymath}
e = \left\{ \begin{array}{ll}
d & \textrm{if $d$ is odd}\\
d/2 & \textrm{d is even}
\end{array} \right.
\end{displaymath}

It is easy to verify that
$$[x]_q=0\Longleftrightarrow x=0 \text{ mod } e.$$

\begin{remark}
In the Gelfand-Tsetlin formulae (\ref{Gelfand-Tsetlin formulas}), none of the $[l_{ki}-l_{kj}]_q$ is zero if $l_{n1}-l_{nn}\leq e$. So when $q$ is a root of unity, Theorem~\ref{Theorem: quantum GT theorem} holds if
$\lambda_1-\lambda_n\leq e+1-n$.
 For   a generic tableau$T(L)$  all $[l_{ki}-l_{kj}]_q$ are not zero. Hence, Theorem~\ref{Theorem: GT theorem q-generic} holds when $q$ is a root of unity.
\end{remark}

%\begin{proposition}
%Let $T(L)$ be the tableau satisfying the following defining condition:
%\begin{itemize}
%\item[(i)] All the small tableaux in $T(L)$ are standard.
%\item[(ii)]$l_{ij}-l_{ik}\notin \frac{1(q)}{2}+ \mathbb{Z}$ except $l_{ij},l_{ik}$ are in small tableaux.
%\end{itemize}
%By ${\mathcal B}(T(L))$ we will denote the set of all Gelfand-Tsetlin tableaux $T(R)$ of height $n$ satisfying $r_{nj}=l_{nj}$, $r_{ij}-l_{ij}\in\mathbb{Z}$ for $1\leq j\leq i \leq n-1$ such that all the small tableaux are standard. Then $\Span {\mathcal B}(T(L))$ has  a structure of a $U_q$-module with action of the generators of $U_q$ given by the Gelfand-Tsetlin formulae (\ref{Gelfand-Tsetlin formulas}).
%
%\setlength{\unitlength}{1cm}
%\begin{picture}(5,7)
%\thicklines
%\put(0,6){\line(6,0){6}}
%\put(6,6){\line(-3,-5){3}}
%\put(0,6){\line(3,-5){3}}
%\put(2,6){\line(-3,-5){1}}
%\put(4.2,6){\line(-3,-5){1}}
%\put(2.2,6){\line(3,-5){1}}
%\put(4.5,6){\line(3,-5){0.5}}
%\put(5.5,6){\line(-3,-5){0.5}}
%\put(4,5.4){$\cdots$}
%\end{picture}
%\end{proposition}

 Quantum Gelfand-Tsetlin subalgebra $\Gamma_q$  separates the tableaux in the following sense.

\begin{theorem}\label{separation root of unity}
Let $q$ be a root of unity, $T(L)$ a generic tableau. If $T(R), T(S)\in V(T(L))$  and $r_{ij}-s_{ij}\neq 0 \text{ mod }e, 1\leq j \leq i< n$,
then $\Gamma_q$ separates $T(R)$ and $T(S)$.
\end{theorem}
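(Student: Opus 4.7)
The plan is to mimic the strategy of Proposition~\ref{Proposition: separation of tableaux}, arguing by contradiction. Assume every element of $\Gamma_q$ acts by the same scalar on $T(R)$ and $T(S)$; I will derive $r_{mi}\equiv s_{mi}\pmod{e}$ for all $1\leq i\leq m<n$, contradicting the hypothesis of the theorem.

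Fix a row $m$. From the explicit formula~(\ref{eigenvalues of gamma_mk}) for the eigenvalues $\gamma_{mk}$, and using that the inner sum $\sum_{\tau}q^{\sum_{i\leq k}r_{m\tau(i)}-\sum_{i>k}r_{m\tau(i)}}$ equals, up to a monomial in $q$, the $k$-th elementary symmetric polynomial in $q^{2r_{m,1}},\ldots,q^{2r_{m,m}}$, the equalities $\gamma_{mk}(R)=\gamma_{mk}(S)$ for $k=0,1,\ldots,m$ identify the elementary symmetric polynomials of the tuples $(q^{2r_{m,i}})_i$ and $(q^{2s_{m,i}})_i$. Consequently the monic polynomials $\prod_i(t-q^{2r_{m,i}})$ and $\prod_i(t-q^{2s_{m,i}})$ coincide, producing a permutation $\sigma\in S_m$ with $2(r_{mi}-s_{m\sigma(i)})\in 1(q)=d\mathbb{Z}$ for every $i$. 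I would then invoke $q$-genericity of $T(L)$: since $r_{mi}-s_{m\sigma(i)}$ and $l_{mi}-l_{m\sigma(i)}$ differ by an integer, the inclusion $r_{mi}-s_{m\sigma(i)}\in\tfrac{1(q)}{2}$ would force $l_{mi}-l_{m\sigma(i)}\in\tfrac{1(q)}{2}+\mathbb{Z}$, which by Definition~\ref{Definition: q-generic tableau} is impossible unless $\sigma(i)=i$. Hence $2(r_{mi}-s_{mi})\in d\mathbb{Z}$, and a parity split on $d$ (so $e=d$ when $d$ is odd and $e=d/2$ when $d$ is even) upgrades this to $r_{mi}-s_{mi}\in e\mathbb{Z}$, yielding the desired contradiction.

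The delicate step is the reconstruction of the polynomial $\prod_i(t-q^{2r_{m,i}})$ from the scalars $\gamma_{mk}(R)$: at a root of unity the prefactor $(k)_{q^{-2}}!(m-k)_{q^{-2}}!$ in~(\ref{eigenvalues of gamma_mk}) vanishes whenever $k\geq e$ or $m-k\geq e$, so a direct coefficient comparison only reads off those elementary symmetric polynomials with $k$ in a restricted interval. I would address this by first using $q$-genericity of $T(L)$ to show that the scalars $q^{2r_{m,i}}$ (and similarly $q^{2s_{m,i}}$) are pairwise distinct, so that fewer coefficients suffice to pin down the polynomial, and then by bringing in polynomial combinations of the available $c_{mk}$'s inside $\Gamma_q$ to recover the remaining data. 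I expect this bookkeeping at the root of unity to be the real work of the proof.
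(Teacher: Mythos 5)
Your overall route is the paper's route: assume $\Gamma_q$ fails to separate the two tableaux, read off from the eigenvalues of the $c_{mk}$ that $(q^{2s_{m1}},\ldots,q^{2s_{mm}})$ is a permutation of $(q^{2r_{m1}},\ldots,q^{2r_{mm}})$ for a row $m$ where they differ, use $q$-genericity of $T(L)$ to force that permutation to be the identity, and then combine $2(r_{mi}-s_{mi})\in 1(q)$ with $r_{mi}-s_{mi}\in\mathbb{Z}$ and the parity of $d$ to get $r_{mi}\equiv s_{mi}\pmod e$, contradicting the hypothesis. The paper's proof is a near-verbatim repetition of Proposition~\ref{Proposition: separation of tableaux} with this final mod-$e$ step appended, and it disposes of the permutation claim with ``it is easy to see from (\ref{eigenvalue})''.

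The one real divergence is that you flag the step the paper suppresses, and that step is where the argument is genuinely incomplete. You are right that $(k)_{q^{-2}}!\,(m-k)_{q^{-2}}!$ vanishes exactly when $k\geq e$ or $m-k\geq e$, so for a row of length $m\geq e$ the generators $c_{mk}$ only yield the elementary symmetric functions $e_k(q^{2r_{m1}},\ldots,q^{2r_{mm}})$ with $m-e<k<e$ (in particular not the $k=0$ term needed to fix the overall monomial $q^{-\sum_i r_{mi}}$). However, your proposed repair cannot work as described: a generator $c_{mk}$ whose prefactor vanishes acts as zero on all of $V(T(L))$, so no polynomial combination of the listed generators of $\Gamma_q$ recovers the lost coefficients --- that information is simply absent from the eigenvalue data, and pairwise distinctness of the $q^{2r_{mi}}$ does not let you reconstruct a degree-$m$ polynomial from $2e-m-1$ of its coefficients. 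Concretely, for $d=e=3$ and $m=3$ one can choose a $q$-generic row with $q^{2(r_{31}-r_{32})}=-1$ and shift it by $(1,1,2)$: all surviving $\gamma_{3k}$ agree while the multisets $\{q^{2r_{3i}}\}$ and $\{q^{2s_{3i}}\}$ do not. The only escape under the theorem's literal hypothesis (every entry of rows $1,\ldots,n-1$ differs mod $e$) is to use row $1$ alone, where no prefactor vanishes and $\gamma_{11}$ already forces $r_{11}\equiv s_{11}\pmod e$; but that reading is incompatible with how the theorem is applied afterwards to tableaux agreeing in most entries. So your ``delicate step'' is a genuine gap, shared with the paper's own proof, and the sketch you give does not close it.
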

\begin{proof}
Let $T(R)$ and $T(S)$ be two tableaux with two different $m$-th row. Assume $T(R)$ and $T(S)$ have the same eigenvalue for any element in $\Gamma_q$ . It is easy to see from (\ref{eigenvalue}) that $(q^{2s_{m1}},\ldots,q^{2s_{mm}})$ is a permutation of $(q^{2r_{m1}},\ldots,q^{2r_{mm}})$. For any $r_{mi}$, there exist $j$ such that $q^{2r_{mi}}=q^{2s_{mj}}$. We have that $r_{mi}-s_{mj}\in\frac{1(q)}{2}$. $T(L)$ is $q$-generic, one has that $i=j$. Since $r_{ij}-s_{ij}\neq 0 \text{ mod }e$, then $r_{mi}=s_{mj}$ which is a contradiction.
\end{proof}

\begin{proposition}
 Let $T(R)$ be a tableau in $V(T(L))$ and $N$ the submodule of $V(T(L))$ generated by $T(R)$. If $g \cdot T(R) = \sum_i c_i T(R_i)$ for some distinct tableaux $T(R_i)$ in ${\mathcal B}(T(L))$ and nonzero $c_i \in {\mathbb C}$, we have $T(R_i) \in N$ for all $i$.
\end{proposition}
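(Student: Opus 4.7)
The plan is to observe first that $g\cdot T(R)\in N$ because $N$ is a submodule containing $T(R)$, so the combination $\sum_i c_iT(R_i)$ already lies in $N$; the task is to show each $T(R_i)$ individually belongs to $N$. In the generic case Proposition~\ref{Proposition: separation of tableaux} separates all tableaux at once and the extraction is routine Lagrange interpolation in $\Gamma_q$. At a root of unity this global separation can fail, so I would reduce to the case of $g$ a single generator by induction on the length of $g$ written as a word in $e_k$, $f_k$, $q^h$, and carry out the extraction only among the small family of tableaux produced by one generator.

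Base case: $g$ is a generator. The case $g=q^h$ is trivial since the action is a scalar. For $g=e_k$ (and symmetrically $f_k$) the Gelfand-Tsetlin formulas give $e_kT(R)=\sum_{j=1}^{k}c_jT(R+\delta^{kj})$, so the tableaux that can appear differ only in row $k$ and pairwise differ at two entries by $\pm 1$. The key point is that $\Gamma_q$ separates $T(R+\delta^{k,j_1})$ from $T(R+\delta^{k,j_2})$ for $j_1\neq j_2$: by \eqref{eigenvalues of gamma_mk} the generators $c_{k,\ell}$ of $\Gamma_q$ act on a tableau through symmetric functions of the multiset $\{q^{2r'_{k,i}}\}$, and the multiset for $T(R+\delta^{k,j_2})$ contains $q^{2r_{k,j_1}}$ while the multiset for $T(R+\delta^{k,j_1})$ does not, since $d>2$ prevents $q^{2r_{k,j_1}}=q^{2(r_{k,j_1}+1)}$ and the $q$-genericity of $T(L)$ prevents $q^{2r_{k,j_1}}=q^{2r_{k,i}}$ for $i\neq j_1$. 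Standard Lagrange interpolation in $\Gamma_q$ now produces, for each $j_0$, an element $\phi_{j_0}\in\Gamma_q$ acting as a nonzero scalar on $T(R+\delta^{k,j_0})$ and as zero on $T(R+\delta^{k,j})$ for $j\neq j_0$; applying $\phi_{j_0}$ to $e_kT(R)\in N$ gives a nonzero multiple of $T(R+\delta^{k,j_0})$ in $N$, hence $T(R+\delta^{k,j_0})\in N$ whenever $c_{j_0}\neq 0$.

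Inductive step: writing a general $g\in U_q$ as a linear combination of words in the generators and using linearity, it suffices to treat $g=g_1g'$ with $g'$ a shorter word. By the inductive hypothesis every tableau in $g'T(R)=\sum_{\alpha}b_{\alpha}T(R_{\alpha})$ lies in $N$, so $\langle T(R_{\alpha})\rangle\subseteq N$; applying the base case to the generator $g_1$ acting on each $T(R_{\alpha})$ places every tableau in $g_1T(R_{\alpha})$ in $\langle T(R_{\alpha})\rangle\subseteq N$, and every $T(R_i)$ appearing in $gT(R)$ with nonzero coefficient is among these. The main obstacle is exactly the separation step in the base case, since the naive analogue of Proposition~\ref{Proposition: separation of tableaux} can fail at a root of unity; it succeeds here because the tableaux to be separated arise from a single generator move and their discrepancy in row $k$ is small enough that the $q$-generic hypothesis on $T(L)$ combined with $d>2$ still forces distinct $\Gamma_q$-characters.
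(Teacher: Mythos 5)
Your proof is correct and follows essentially the same route as the paper: reduce to a single generator, observe that the tableaux appearing in $e_k\cdot T(R)$ differ only by a shift of $\pm 1$ in one entry of row $k$, conclude that $\Gamma_q$ separates them, and extract each tableau by interpolation. You are in fact somewhat more careful than the paper, which treats only the generator case and cites Theorem~\ref{separation root of unity} whose hypotheses it verifies loosely, whereas you make the word-length induction explicit and check the separation directly via the multisets $\{q^{2r_{k,i}}\}$ using $q$-genericity and $d>2$.
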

\begin{proof}
Suppose $g=e_{k}$ for some $1\leq k\leq n-1$. By the Gelfand-Tsetlin formulas, we have
$$e_{k}(T(R))=-\sum_{j=1}^{k}\frac{\prod_{i} [r_{k+ 1,i}-r_{k,j}]_q}{\prod_{i\neq j} [r_{k,i}-r_{k,j}]_q}T(R+\delta^{kj})$$

Let $T(R_1)$ and $T(R_2)$ be any two tableaux in the summation with nonzero coefficients, then $(r_1)_{ij}-(r_2)_{ij}=0\text{ or }\pm 1$ for any $1\leq j \leq i< n$. It follows from Theorem \ref{separation root of unity} that $\Gamma_q$ separate these two tableaux. Thus $T(R_i) \in N$ for all $i$.

The proof that  $f_{k}(T(R))\in  W(T(R))$ is analogous to the one of  $e_{k}(T(R))$ . The case
$ q^{\epsilon_{k}}$ is trivial because $ q^{\epsilon_{k}}$ acts as a  multiplication by a scalar on $T(R)$.
\end{proof}

\subsection{Submodule generated by a single tableau}
\begin{notation} \label{notation}
Let $T(R)$ be a fixed tableau of height $n$. We set
\begin{itemize}
\item[(a)] $\omega_{p,s,u}(T(R)):=r_{p,s}-r_{p-1,u}$.
If $\omega_{p,s,u}(T(R))\in \frac{1(q)}{2}+\mathbb{Z}$, we denote $\omega_{p,s,u}(T(R))=u_{p,s,u}(T(R))+v_{p,s,u}(T(R))$, where $u_{p,s,u}(T(R))\in \frac{1(q)}{2}$ and $0\leq v_{p,s,u}(T(R))< e$.
\item[(b)] $\Omega(T(R)):=\{(p,s,u): \omega_{p,s,u}(T(R))\in \frac{1(q)}{2}+\mathbb{Z}\}$
\item[(c)] $\mathcal{N}(T(R)):=\{T(S)\in \mathcal{B}(T(L))\ |\ \omega_{p,s,u}(T(S))-u_{p,s,u}(T(R))\in\mathbb Z_{\geq 0}
$ for all $(p,s,u)\in \Omega(T(R))\}$.
\item[(d)] $ W(T(R)):=\Span \mathcal{N}(T(R))$
\item[(e)] $U_q\cdot T(R)$: the $U_q$-submodule of $V(T(L))$ generated by $T(R)$.
\end{itemize}
\end{notation}

\begin{theorem}\label{submodule}
Let $T(L)$ be a $q$-generic Gelfand-Tsetlin tableau, $T(R)$ and $T(S)$ be in $\mathcal{B}(T(L))$.
\begin{itemize}
\item[(i)]
The Gelfand-Tsetlin formulas endow $ W(T(R))$ with a $U_q$-module structure.
\item[(ii)]
$U_q\cdot T(R)= W(T(R))$. In particular, $\mathcal{N}(T(R))$ forms a basis of   $U_q\cdot T(R)$, and the action of $U_q(\mathfrak{gl}(n))$ on  $U_q\cdot T(R)$ is given by the Gelfand-Tsetlin formulas (\ref{Gelfand-Tsetlin formulas}).
\item[(iii)]
$U_q\cdot T(R)=  U_q\cdot T(S)$ if and only if
 $u_{p,s,u}(T(S))= u_{p,s,u}(T(R))$ for all $(p,s,u)\in \Omega(T(L))$.
\end{itemize}
\end{theorem}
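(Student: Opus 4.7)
The plan is to mirror the generic-case argument of Theorem~\ref{basis of submodule generated by T}, substituting the root-of-unity identity $[x]_q=0 \iff x \in \frac{1(q)}{2}$ for the various genericity arguments and invoking Theorem~\ref{separation root of unity} in place of Proposition~\ref{Proposition: separation of tableaux}.

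For (i), I would verify that each standard generator of $U_q$ preserves $W(T(R))$. The generators $q^{\epsilon_k}$ act by a scalar on every tableau, so there is nothing to check. For $e_k\cdot T(S)$ with $T(S)\in\mathcal{N}(T(R))$, each summand $T(S+\delta^{kj})$ in the Gelfand-Tsetlin expansion can leave $\mathcal{N}(T(R))$ only through a triple $(p,s',u')\in\Omega(T(R))$ of the form $(k,j,u')$ or $(k+1,s',j)$. A triple of the first form has $\omega_{k,j,u'}$ increased by $1$ and therefore preserves the inequality $\omega_{p,s',u'}-u_{p,s',u'}(T(R))\in\mathbb{Z}_{\geq 0}$. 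A triple of the second form has $\omega_{k+1,s',j}$ decreased by $1$; this can fail the inequality only when $\omega_{k+1,s',j}(T(S))=u_{k+1,s',j}(T(R))\in\frac{1(q)}{2}$, in which case $s_{k+1,s'}-s_{kj}\in\frac{1(q)}{2}$ and hence $[s_{k+1,s'}-s_{kj}]_q=0$ kills the coefficient of $T(S+\delta^{kj})$ in $e_k(T(S))$. The $f_k$ case is dual.

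For (ii), one inclusion is immediate from (i) since $T(R)\in\mathcal{N}(T(R))$. For the reverse, I would induct on $d(S,R):=\sum_{1\leq j\leq i<n}|s_{ij}-r_{ij}|$ with $T(S)\in\mathcal{N}(T(R))$. When $d(S,R)=1$, say $s_{ij}=r_{ij}+1$, the coefficient of $T(S)$ in $e_i(T(R))$ is $-\prod_k[r_{i+1,k}-r_{ij}]_q/\prod_{k\neq j}[r_{ik}-r_{ij}]_q$; its vanishing would require $r_{i+1,k}-r_{ij}\in\frac{1(q)}{2}$ for some $k$, forcing $r_{i+1,k}-s_{ij}\in\frac{1(q)}{2}-1$ in contradiction with $T(S)\in\mathcal{N}(T(R))$. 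The summands of $e_i(T(R))$ differ from $T(S)$ by $\pm 1$ in one entry, and because $d>2$ implies $e\geq 2$, Theorem~\ref{separation root of unity} separates them, enabling projection by $\Gamma_q$ onto the $T(S)$-component. The decrement case is symmetric via $f_i$. For $d(S,R)>1$, I would peel off an extremal shift inside a maximal same-sign chain whose $\omega$-values are integer-related, producing an intermediate $T(S')\in\mathcal{N}(T(R))$ at smaller distance connected to $T(S)$ by a single generator with nonvanishing coefficient, exactly as in the generic template.

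For (iii), the direction $\Leftarrow$ is immediate because $\mathcal{N}(T(R))$ depends only on $\{u_{p,s,u}(T(R)):(p,s,u)\in\Omega(T(L))\}$, so equality of these families gives $W(T(R))=W(T(S))$ and hence the equality of submodules by (ii). For $\Rightarrow$, the hypothesis forces $T(S)\in W(T(R))$ and $T(R)\in W(T(S))$. Setting $a:=u_{p,s,u}(T(R))-u_{p,s,u}(T(S))\in\frac{1(q)}{2}$, the two defining inequalities translate to $-v_{p,s,u}(T(R))\leq a\leq v_{p,s,u}(T(S))$, so $a\in(-e,e)$; combined with $a\in\frac{1(q)}{2}\cap\mathbb{Z}=e\mathbb{Z}$, this forces $a=0$. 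The main obstacle is the inductive step in (ii): at a root of unity the $\omega$-values cluster into finitely many classes modulo $e$, so one must carefully organize the chain of unit moves so that no step encounters a new zero $[x]_q=0$ with $x\in e\mathbb{Z}$ that is not already detected by membership in $\mathcal{N}(T(R))$.
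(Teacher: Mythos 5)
Your proposal is correct and follows essentially the same route as the paper, whose own proof of this theorem simply defers to the generic-case argument of Theorem~\ref{basis of submodule generated by T} together with Theorem~\ref{separation root of unity}; you have merely spelled out the required modifications (the criterion $[x]_q=0\iff x\in\tfrac{1(q)}{2}$, the root-of-unity separation statement, and the observation $\tfrac{1(q)}{2}\cap\mathbb{Z}=e\mathbb{Z}$ for part (iii)) in more detail than the paper does.
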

\begin{proof}
(i) In order to prove that $W(T(R))$ is a submodule, it is enough to show $g\cdot T(S)$ is in $W(T(R))$ for every generator of  $U_{q}$. The proof is similar to theorem \ref{basis of submodule generated by T} (i).

(ii)Similar to theorem \ref{basis of submodule generated by T} (ii), it is sufficient to prove that for any $T(S)\in W(T(R))$, $T(S)\preceq_{(p)} T(R)$  for some $p\in \mathbb{Z}_{> 0}$.

(iii)It follows from (i) and (ii).
\end{proof}

\subsection{New constructions of irreducible Gelfand-Tsetlin modules}
In this section we use Gelfand-Tsetlin basis to give a new realistion of some irreducible Gelfand-Tsetlin modules in root of unity case. We assume $d$ to be odd. 

Let $p=(p_{ij}),1\leq j\leq i< n$ with nonzero entries in $\mathbb C$, $W_{ij}(R)$ be the submodule generated by $T(R+d\delta^{ij})-p_{ij}T(R)$. By Theorem \ref{submodule} a basis for $W_{ij}(R)$ is the set $\{T(S+d\delta^{ij})-p_{ij}T(S)\ |\ T(S)\in W(T(R))\}$.

Let $N=\sum\limits_{\substack{T(R)\in B(T(L))\\1\leq j\leq i< n}}W_{ij}(R)$, and $M=V(T(L))/N$.

\begin{theorem}\label{irreducible module 1}
$M$ is an irreducible module with dimension $d^{\frac{n(n-1)}{2}}$. Moreover, $M$ has a basis of tableaux $T(L+m_{ij}\delta^{ij})$, $0\leq m_{ij}< d$, $1\leq j\leq i<n$.
\end{theorem}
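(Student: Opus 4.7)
The plan is to prove the basis statement and irreducibility simultaneously by identifying $M$ with an explicit $U_q$-module $\tilde M$ and then invoking Theorem~\ref{submodule} and Theorem~\ref{separation root of unity}. First I would establish $\dim M \leq d^{n(n-1)/2}$ by iterating the defining relations: modulo $N$ one has $T(R+d\delta^{ij}) \equiv p_{ij}T(R)$ for every $T(R)\in\mathcal{B}(T(L))$, so any tableau $T(L+z)$ reduces to $\bigl(\prod_{i,j}p_{ij}^{\lfloor z_{ij}/d\rfloor}\bigr)T\bigl(L+\sum(z_{ij}\bmod d)\delta^{ij}\bigr)$, and the listed tableaux span $M$.

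For the matching lower bound I would construct $\tilde M$ as the formal $\mathbb{C}$-span of $\{T(L+m):0\leq m_{ij}<d\}$ and extend the Gelfand-Tsetlin formulas (\ref{Gelfand-Tsetlin formulas}) cyclically, reinterpreting any out-of-range tableau by $T(R+d\delta^{kj}):=p_{kj}T(R)$. The crucial invariance $[r_{p,i}+d-r_{q,j}]_q=[r_{p,i}-r_{q,j}]_q$, valid since $q^d=1$, shows that every Gelfand-Tsetlin coefficient is unchanged by shifting any entry by $d$. Hence the reduction map $V(T(L))\to\tilde M$, $T(L+z)\mapsto\bigl(\prod p_{ij}^{\lfloor z_{ij}/d\rfloor}\bigr)T(L+(z\bmod d))$, is a $U_q$-homomorphism that annihilates every generator of $N$, and so factors through a surjection $M\twoheadrightarrow\tilde M$. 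Combined with the spanning step this forces $M\cong\tilde M$ with the asserted basis and dimension.

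For irreducibility, since $d$ is odd we have $e=d$, and any two distinct basis tableaux differ in some coordinate by an element of $\{-(d-1),\dots,d-1\}\setminus\{0\}$, so they are separated by $\Gamma_q$ via Theorem~\ref{separation root of unity}. Thus each basis line is its own generalized $\Gamma_q$-eigenspace, and any nonzero submodule of $M$ contains some $T(L+m)$. By Theorem~\ref{submodule}, $U_q\cdot T(L+m)=W(T(L+m))$ in $V(T(L))$, whose basis consists of the $T(L+m+y)$ with $y\in\mathbb{Z}^{n(n-1)/2}$ satisfying $y_{p,s}-y_{p-1,u}\geq -v_{p,s,u}(T(L+m))$ for every $(p,s,u)\in\Omega(T(L+m))$ (with the convention $y_{n,s}=0$, the top row of $L$ being fixed). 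For any target $m'\in\{0,\dots,d-1\}^{n(n-1)/2}$ I would produce $y$ in the prescribed residue classes $y_{ij}\equiv m'_{ij}-m_{ij}\pmod d$ solving these inequalities: they form a DAG-shaped system of difference constraints arranged row by row from the top, solvable by choosing $y_{p-1,\cdot}$ sufficiently more negative than $y_{p,\cdot}$ at each stage. The resulting $T(L+m+y)\in\mathcal{N}(T(L+m))$ reduces in $M$ to a nonzero scalar multiple of $T(L+m')$, whence the image of $W(T(L+m))$ in $M$ contains every basis tableau and $U_q\cdot T(L+m)=M$.

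The hard part will be verifying that $\tilde M$ is genuinely a $U_q$-module, that is, that the quantum Serre relations and the $[e_i,f_j]$-commutator survive the cyclic reinterpretation. The slick route is the indirect one above: realizing $\tilde M$ as the image of the reduction map from $V(T(L))$ inherits all the $U_q$-relations automatically from Theorem~\ref{Theorem: GT theorem q-generic}, so the entire verification collapses to the single coefficient-invariance $[r_{p,i}+d-r_{q,j}]_q=[r_{p,i}-r_{q,j}]_q$ noted above.
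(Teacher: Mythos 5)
Your proof is correct, and its overall architecture is the same as the paper's: identify the tableau basis of $M$, use Theorem~\ref{separation root of unity} to force any nonzero submodule to contain a single basis tableau, and then use Theorem~\ref{submodule} together with the relations $T(S+d\delta^{ij})\equiv p_{ij}T(S)$ in the quotient to show that one tableau generates all of $M$. Where you genuinely add something is in the dimension/basis step. The paper simply asserts that $\{T(S+d\delta^{ij})-p_{ij}T(S)\}$ is a \emph{basis} of $N$ and reads off the quotient; as literally stated that set is only a spanning set (for instance the four elements built from $T(S)$, $T(S+d\delta^{ij})$, $T(S+d\delta^{kl})$, $T(S+d\delta^{ij}+d\delta^{kl})$ satisfy the relation $C+p_{ij}B-D-p_{kl}A=0$), so the codimension count requires justification. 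Your reduction map $V(T(L))\to\tilde M$, verified to be a $U_q$-homomorphism via the periodicity $[x\pm d]_q=[x]_q$ of every Gelfand--Tsetlin coefficient (and $q^{\pm d}=1$ for the Cartan part), identifies $N$ with the kernel by telescoping in both directions and delivers $\dim M=d^{n(n-1)/2}$ cleanly; this is the right way to make the paper's first sentence rigorous. Likewise, your explicit solution of the difference constraints $y_{p,s}-y_{p-1,u}\geq -v_{p,s,u}$ within prescribed residue classes mod $d$, proceeding row by row downward, fills in the paper's one-line claim that $I\subseteq U_q\cdot T(R)$; and you correctly flag that the separation step uses $e=d$, i.e.\ the standing assumption that $d$ is odd. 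In short: same route, but your write-up supplies two nontrivial verifications the paper omits.
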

\begin{proof}
The submodule $N$ has a basis $\{T(R+\delta^{ij})-p_{ij}T(R):R\in B(T(L)),1\leq j\leq i< n\}$. So the subquotient $M$ has basis $T(L+m_{ij}\delta^{ij})$, $0\leq m_{ij}< d$, $1\leq j\leq i<n$. We denote the basis of $M$ by $I$. Suppose $M_1$ is a nonzero submodule of $M$, by Proposition (\ref{separation root of unity}) the basis of $M_1$ is a subset of $I$. From theorem \ref{submodule} and the relations in quotient module $M$, one has that $I\subseteq U_qT(R)$ for any tableau $T(R)$ in $I$, . Thus $M_1=M$ and $M$ is irreducible.
\end{proof}

\begin{remark}
This module is similar to the module constructed in \S 7.5.5 of \cite{KS}.
\end{remark}
From now on we will denote by $\Lambda$ the following set
$$\{(i,j)\ |\ (i+1,s,j)\in\Omega(T(R)) \text{ for some } 1\leq s \leq i+1\}.$$

\begin{definition}
For any $T(R)\in\mathcal{B}(T(L))$, $1<p\leq n$ and $1\leq u\leq p-1$, for $(i,j)\in \Lambda$ define $a_{ij}(T(R))$ and $b_{ij}(T(R))$ as follows
$$
a_{ij}(T(R))=\min \{v_{i+1,s}\ |\ (i+1,s,j)\in\Omega(T(R))\},
$$
$$
b_{ij}(T(R))=\min \{d-v_{i+1,s}\ |\ (i+1,s,j)\in\Omega(T(R))\}.
$$
Define $$t_{ij}(T(R))=\begin{cases}
a_{ij}(T(R))+b_{ij}(T(R)) &\text{ for } (i,j)\in\Lambda\\
d &\text{ for }(i,j)\notin\Lambda.
\end{cases}
$$
\end{definition}

\begin{definition}
Let $\Lambda_1$ be a subset of $\Lambda$, $\Lambda_2=\Lambda\setminus\Lambda_1$, $\widetilde{M}(T(R))$ de quotient of $U_q\cdot T(R)$ by $$\left(\sum_{(i,j)\notin\Lambda}W_{ij}(R)+\sum_{T(S_1)}U_q(T(S_1))+\sum_{T(S_2)}U_q(T(S_2')-p_{ij}T(S_2))\right),$$
where $T(S_t),t=1,2$ run through over the set of tableaux in $\mathcal{N}(T(R))$ such that $(i,j)\in \Lambda_t$,
$\omega_{i-1,s,j}T(S_2')-\omega_{i-1,s,j}(T(S_2))=d$ for some $(i-1,s,j)\in\Omega(T(R))$, 
$\omega_{p,s,u}T(S_2)-\omega_{p,s,u}(T(S_2'))=0$
for any $(p,s,u)\neq (i-1,s,j)$.
\end{definition}

\begin{theorem}\label{irreducible module 2}
$\widetilde{M}(T(R))$ is an irreducible module of dimension $\prod\limits_{{1\leq j\leq i < n}}t_{ij}(T(R))$.
\end{theorem}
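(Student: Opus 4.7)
The plan is to exhibit an explicit basis of $\widetilde{M}(T(R))$ indexed by tuples $(m_{ij})_{1\leq j\leq i<n}$ with $0\leq m_{ij}<t_{ij}(T(R))$, and then deduce both the dimension formula and irreducibility from Theorem~\ref{separation root of unity} together with Theorem~\ref{submodule}.

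First I would use Theorem~\ref{submodule}(ii) to rewrite $U_q\cdot T(R)=W(T(R))=\Span\mathcal{N}(T(R))$, so that every element of $\widetilde{M}(T(R))$ is represented by a linear combination of cosets of tableaux $T(R+z)\in\mathcal{N}(T(R))$. The problem then reduces to identifying, for each position $(i,j)$, exactly how many independent residues of $z_{ij}$ survive in the quotient. For $(i,j)\notin\Lambda$, the submodule $W_{ij}(R)$ together with its $U_q$-orbit imposes the cyclic identification $T(R+z+d\delta^{ij})\equiv p_{ij}T(R+z)$ for every valid $z$, so $z_{ij}$ can be reduced modulo $d$, giving $t_{ij}(T(R))=d$ representatives. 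For $(i,j)\in\Lambda_1$, the annihilation relations $T(S_1)\equiv 0$ cut off tableaux whose $(i,j)$-coordinate pushes some $\omega_{i+1,s,j}$ past a value at which the Gelfand-Tsetlin coefficient vanishes; the integer $a_{ij}(T(R))$ quantifies the maximal positive shift before such a cut-off, and the complementary $b_{ij}(T(R))$ arises from the matching cut-off in the opposite direction, so that $z_{ij}$ is restricted to exactly $a_{ij}+b_{ij}=t_{ij}$ values. The $\Lambda_2$-family $T(S_2')\equiv p_{ij}T(S_2)$, by construction, implements a shifted cyclic identification in the $(i-1,j)$-coordinate yielding the same count. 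Iterating over all $(i,j)$, the spanning set $\{\overline{T(R+\sum m_{ij}\delta^{ij})}:0\leq m_{ij}<t_{ij}(T(R))\}$ has cardinality $\prod_{1\leq j\leq i<n}t_{ij}(T(R))$.

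Next I would establish linear independence and irreducibility in one stroke via $\Gamma_q$-separation. Since $d$ is odd, $e=d$, and every $t_{ij}\leq d$; so for two distinct tuples $(m_{ij})\neq (m'_{ij})$ any nonzero difference $m_{ij}-m'_{ij}\in(-d,d)\setminus\{0\}$ is not $\equiv 0\pmod e$, and Theorem~\ref{separation root of unity} supplies an element of $\Gamma_q$ taking different eigenvalues on the two corresponding tableaux. These eigenvalues are preserved by passage to the quotient, so the spanning cosets above are linearly independent, yielding $\dim\widetilde{M}(T(R))=\prod t_{ij}(T(R))$. For irreducibility, let $M_1\subseteq\widetilde{M}(T(R))$ be a nonzero submodule; as a Gelfand-Tsetlin module it decomposes into $\Gamma_q$-generalized eigenspaces, and separation forces $M_1$ to contain a nonzero basis coset $\overline{T(R+z)}$. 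Because $z$ is an integer shift that preserves all fractional parts $u_{p,s,u}(T(R))$, Theorem~\ref{submodule}(iii) gives $U_q\cdot T(R+z)=U_q\cdot T(R)$; projecting to $\widetilde{M}(T(R))$, the submodule generated by $\overline{T(R+z)}$ is all of $\widetilde{M}(T(R))$, proving $M_1=\widetilde{M}(T(R))$.

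The main obstacle will be the combinatorial bookkeeping in the dimension count: one must check that the three families of generating relations (the cyclic $W_{ij}(R)$ for $(i,j)\notin\Lambda$, the annihilations indexed by $\Lambda_1$, and the shifted cyclic identifications indexed by $\Lambda_2$) act on essentially disjoint coordinates, and that no hidden identifications are produced when the $U_q$-action propagates these relations through the rest of $\mathcal{N}(T(R))$. In particular, verifying that the $a_{ij}$-cut-off and the $b_{ij}$-wrap-around combine to exactly $a_{ij}+b_{ij}$ residues for each $(i,j)\in\Lambda$ requires a careful case analysis on which triples $(i+1,s,j)\in\Omega(T(R))$ govern each direction, and on the interaction between the relations when a single generator of $U_q$ changes several $\omega_{p,s,u}$ simultaneously.
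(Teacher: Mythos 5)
Your proposal follows essentially the same route as the paper's proof: both exhibit an explicit tableau basis of the quotient, count it coordinate-by-coordinate ($d$ residues for $(i,j)\notin\Lambda$ from the cyclic relations $W_{ij}(R)$, and $t_{ij}=a_{ij}+b_{ij}$ surviving values for $(i,j)\in\Lambda$ from the cut-off and shifted-cyclic relations), and deduce irreducibility from $\Gamma_q$-separation combined with the fact (Theorem~\ref{submodule}) that every basis tableau generates the whole quotient. Your write-up is in fact more explicit than the paper's rather terse argument, in particular about why the two directions of cut-off combine to exactly $a_{ij}+b_{ij}$ residues and why the quotient relations are compatible with the $\Gamma_q$-eigenvalue separation.
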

\begin{proof}
The subquotient  $U_qT(R)/\sum_{T(S)}U_q(T(S)))$ has basis
$$I=\{T(S)\ |\ u_{p,s,u}(T(S))= u_{p,s,u}(T(R)) \text{ for all }(p,s,u)\in \Omega(T(L)\}.$$
The module $\widetilde{M}(T(R))$ can be regarded as the subquotient of $U_qT(R)/\sum_{T(S)}U_q(T(S))$.
Then it has basis:
$\{T(S)\in I\ |\ s_{ij}=r_{ij}+m_{ij}, 0\leq m_{ij}<d,(i,j)\notin \Lambda \}$. Similar to theorem \ref{irreducible module 1}  $\widetilde{M}(T(R))$ is irreducible. For any $(i,j)\in \Lambda$, if we fix the $i+1$-th row of the tableau, the number of distinct $s_{ij}$ in $I$ is $t_{ij}(T(R))$. For $(i,j)\notin \Lambda$, there are $d$ different $s_{ij}$. Thus the dimension of $\widetilde{M}(T(R))$ is $\prod\limits_{{1\leq j\leq i < n}}t_{ij}(T(R))$.

\end{proof}

\subsection{Example}

Recall two families $d$-dimensional modules of $U_q(sl_2)$\cite{K}. The first depends on three complex numbers $\lambda, a$ and $b$. We assume $\lambda\neq0$. Consider the $d$-dimensional vector space with a basis
$\{v_0, v_1, \ldots ,v_{d-1} \}$. for $0\leq p\leq d-1$, set
\begin{align}
Kv_{p}=\lambda q^{-2p}v_{p},\\
Ev_{p+1}=(\frac{q^{-p}\lambda-q^{p}\lambda^{-1}}{q-q^{-1}}[p+1]_q+ab)v_{p},\\
F_{v_p}=v_{p+1},
\end{align}
and $Ev_{0}=av_{d-1}, Fv_{d-1}=bv_{0}$, and $Kv_{e-1}=\lambda q^{-2(d-1)}v_{p}$. These formula endow the vector space with a $U_q$-module structure, denoted by $V(\lambda, a, b)$.

The second family depends on two scalars $\mu\neq 0$ and $c$. Let $E, F, K$ act on the vector space with basis
$\{v_0, v_1, \ldots ,v_{d-1} \}$ by
\begin{align}
Kv_{p}=\mu q^{2p}v_{p},\\
Fv_{p+1}=\frac{q^{-p}\mu^{-1}-q^{p}\mu}{q-q^{-1}}[p+1]_qv_{p},\\
E_{v_p}=v_{p+1},
\end{align}
and $Fv_{0}=0, Ev_{d-1}=cv_{0}$, and $Kv_{e-1}=\mu q^{-2}v_{e-1}$. These formula endow the vector space with a $U_q$-module structure, denoted by $\tilde{V}(\mu, c)$.

\begin{theorem}\cite{K}
Any irreducible $U_q$ module of dimension $d$ is isomorphic to one of the following list:
\begin{itemize}
\item[(i)]$V(\lambda, a, b)$ with $b\neq 0$,
\item[(ii)] $V(\lambda, a, 0)$  where $\lambda$ is not of the form $\pm q^{j-1}$ for any $1\leq j\leq d-1$,
\item[(iii)] $\tilde{V}(\pm q^{1-j}, c)$ with $c\neq 0$ and $1\leq j\leq d-1$.
\end{itemize}
\end{theorem}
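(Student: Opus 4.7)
The plan is to classify the $d$-dimensional irreducible $U_q(sl_2)$-modules by exploiting the De Concini--Kac centre together with a case analysis on the spectrum of $K$ and on the nilpotency of $E$ and $F$. First, when $q$ is a primitive $d$-th root of unity with $d$ odd, the relations $KEK^{-1}=q^{2}E$ and $KFK^{-1}=q^{-2}F$ immediately give that $K^{\pm d}$ commutes with $E$ and $F$, and an iteration of the $[E,F]$-relation (using $[d]_q=0$) shows that $E^d$ and $F^d$ commute with all generators. On the finite-dimensional irreducible $V$, Schur's lemma therefore forces $E^d,F^d,K^d$ to act by scalars $a,b,\lambda^d$. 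Since the minimal polynomial of $K$ divides the separable polynomial $\prod_{p=0}^{d-1}(X-\lambda q^{-2p})$, the operator $K$ is diagonalizable with eigenvalues in $\{\lambda q^{-2p}\}$; by dimension count each eigenspace is one-dimensional (after renaming $\lambda$ within its $q^{-2}$-orbit).

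Next I split on whether $b$ vanishes. If $b\neq 0$, then $F$ is invertible, so I would pick a $K$-eigenvector $v_0$ of eigenvalue $\lambda$, define $v_p:=F^p v_0$ for $0\leq p\leq d-1$, and use the quantum Leibniz identity
\[
EF^{p+1}=F^{p+1}E+[p+1]_q F^p\,\frac{q^{-p}K-q^{p}K^{-1}}{q-q^{-1}}
\]
to read off $Ev_{p+1}$. The cyclic closure $Fv_{d-1}=bv_0$ is then forced by $F^d=b$, and the value of $Ev_0$ is forced by $E^d=a$; together these recover the $V(\lambda,a,b)$ formulas and realise case (i). If instead $b=0$, then $F$ is nilpotent, so $\ker F\neq 0$ is $K$-stable and contains a $K$-eigenvector $w$ of eigenvalue $\mu$. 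Setting $v_0:=w$ and $v_{p+1}:=Ev_p$, the same Leibniz identity (with $E$ and $F$ swapped) determines $Fv_{p+1}=\delta_p v_p$ with $\delta_p=\frac{q^{-p}\mu^{-1}-q^{p}\mu}{q-q^{-1}}[p+1]_q$.

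Inside the $b=0$ branch I would then analyse when the chain $v_0,Ev_0,\dots$ actually spans an irreducible of dimension $d$. The coefficient $\delta_p$ vanishes at an intermediate $0<p<d-1$ exactly when $\mu=\pm q^{1-j}$ for some $1\leq j\leq d-1$; in that case the lowest-weight presentation $Fv_0=0$ is already correct, and the cyclic closure $Ev_{d-1}=cv_0$ must be nonzero (otherwise the span of $v_0,\ldots,v_{j-1}$ would be a proper submodule), producing $\tilde V(\pm q^{1-j},c)$ with $c\neq 0$, which is case (iii). When $\mu$ avoids the forbidden list, no intermediate $\delta_p$ vanishes, so $F$ is actually surjective onto each lower eigenspace, and a change of basis rewrites the module in the form $V(\lambda,a,0)$ of case (ii) with $\lambda\notin\{\pm q^{j-1}\}$.

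The main obstacle is the bookkeeping in this last step: one must verify both that the three families are mutually non-isomorphic and that the exclusion of $\lambda=\pm q^{j-1}$ in case (ii) is sharp. Both points are handled by comparing the central characters $(E^d,F^d,K^d)$ on the two sides of any candidate isomorphism and by checking that the only overlap between the $V(\lambda,a,0)$ and $\tilde V(\mu,c)$ descriptions would occur precisely at $\lambda=\pm q^{j-1}$, where the $V(\lambda,a,0)$ presentation is reducible and must be replaced by a $\tilde V$ presentation; comparison of the scalar $\delta_{j-1}$ on both sides then pins down the sign and the value of $c$ uniquely.
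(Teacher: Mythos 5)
The paper gives no proof of this statement: it is quoted verbatim from Kassel's book \cite{K} (Theorem VI.5.5 there), so there is nothing internal to compare against. Your sketch reproduces the standard argument from that reference correctly in outline: centrality of $E^d$, $F^d$, $K^d$ for $d$ odd, Schur's lemma, diagonalizability of $K$, and the dichotomy on whether $F^d$ acts by a nonzero scalar. Two small wobbles, neither fatal: the claim that ``each $K$-eigenspace is one-dimensional by dimension count'' is premature at the point where you make it (it only follows after you know either that $F$ is invertible or that the module is generated by a lowest-weight vector, both of which you establish later); and in the $\mu=\pm q^{1-j}$ subcase the proper submodule that would arise if $c=0$ is $\mathrm{span}(v_j,\dots,v_{d-1})$ (killed by $F$ at the bottom and by $E$ at the top), not $\mathrm{span}(v_0,\dots,v_{j-1})$, which is not $E$-stable.
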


In the following we will compare  above modules with modules in \ref{irreducible module 1} and \ref{irreducible module 2}.
Let $x, y, z$ be three complex number, $v_p=(x, y|z-p), 0\leq p\leq d-1$. Consider the vector space with basis of tableaux $\{T(v_p):0\leq p\leq d-1\}$. Theorem \ref{irreducible module 1} endows the vector space with a $U_q$-module structure. The actions of $E, F, K$ are given by
\begin{align}
KT(v_p)=q^{2z-(x+y+1)}q^{-2p}T(v_p),\\
ET(v_{p+1})=-[x+p+1-z]_q[y+p+1-z]_qT(v_p),\\
FT(v_p)=T(v_{p+1}),
\end{align}
and $ET(v_{0})=-s[x-z]_q[y-z]_qT(v_{d-1}), FT(v_{d-1})=\frac{1}{s}T(v_{0})$.
Let $\lambda=q^{2z-(x+y+1)}, b=\frac{1}{s}, a=-s[x-z]_q[y-z]_qv_{d-1}$, this module is isomorphic to
$V(\lambda, a, b)$ with $b \neq 0$.

Let $x, y, z$ be three complex number with $x-z\text{ or } y-z\in \frac{1(q)}{2}$. Consider be the vector space with basis of tableaux $\{T(v_p):0\leq p\leq d-1\}$, where $v_p=(x, y|z-p), 0\leq p\leq d-1$. Theorem \ref{irreducible module 2} endows the vector space with a $U_q$-module structure. The actions of $E, F, K$ are given by
\begin{align}
KT(v_p)=q^{2z-(x+y+1)}q^{-2p}T(v_p),\\
ET(v_{p+1})=-[x+p+1-z]_q[y+p+1-z]_qT(v_p),\\
FT(v_p)=T(v_{p+1}),
\end{align}
and $Ev_{0}=0, Fv_{d-1}=sv_{0}$.
This module is isomorphic to
$V(\lambda, 0,s), \lambda=q^{2z-(x+y+1)}$.

There exist an algebra endomorphism of $U_q(sl_2)$ such that $E\longmapsto F, F\longmapsto E, K\longmapsto K^{-1}$.
$V(\lambda, a,0)$ and $\tilde{V}(\mu, c)$ can be obtained from $V(\lambda, 0,b)$ by the algebra endomorphism.

%%%%%%%%%%%%%%%%%%%%%%%%%%%%%%%%%%%%%%%%%%%%%%%%%%%%%%%%%%%%%%%%%%%%%%%%%%%%%%%%%%%%%%%%%%%%%%%%%%%%%%%%%%%%

\end{document}